\newcommand{\pr}{\partial}
\newcommand{\dom}{\Omega}
\newcommand{\re}{\mathfrak{Re}\;}
\newcommand{\R}{\mathbb{R}}
\newcommand{\C}{\mathbb{C}}
\newcommand{\F}{\mathscr{F}}
\newcommand{\G}{\mathcal{G}}
\newcommand{\T}{\mathcal{T}}
\newcommand{\X}{\mathscr{X}}
\newcommand{\Y}{\mathscr{Y}}
\newcommand{\el}{\mathcal{L}}
\newcommand{\Ord}{\mathscr{O}}
\newcommand{\ttr}{\mathfrak{t}}
\newcommand{\m}{\mathfrak{m}}
\newcommand{\dd}{\,\text{d}}
\newcommand{\supp}{\mbox{supp\;}}
\newcommand{\crl}{{\nabla\times}}
\newcommand{\dv}{{\nabla\cdot}}
\newtheorem{thm}{Theorem}[section]
\newtheorem{prop}{Proposition}[section]
\newtheorem{lem}{Lemma}[section]
\newtheorem{cor}{Corollary}[section]
\title{On an inverse boundary value problem for a nonlinear time harmonic Maxwell system}
\author{C\u{a}t\u{a}lin I. C\^{a}rstea\thanks{HKUST Jockey Club Institute for Advanced Study, The Hong Kong University of Science and Technology, Clear Water Bay, Kowloon, Hong Kong; email: catalin.carstea@gmail.com}}
\date{}
\begin{document}
\maketitle

\begin{abstract}
This paper considers a class of nonlinear time harmonic Maxwell systems at fixed frequency, with nonlinear terms taking  the form $\mathscr{X}(x,|\vec E(x)|^2)\vec E(x)$, $\mathscr{Y}(x,|\vec H(x)|^2)\vec H(x)$, such that $\mathscr{X}(x,s)$, $\mathscr{Y}(x,s)$ are both real analytic in $s$. Such nonlinear terms appear in nonlinear optics theoretical models. Under certain regularity conditions, it can be shown that boundary measurements of tangent components of the electric and magnetic fields determine the electric permittivity and magnetic permeability functions as well as the form of the nonlinear terms.
\end{abstract}
{\bf MSC(2000): } 35R30, 35F60

\section{Introduction}

Let $\dom\subset\R^3$ be a bounded domain with smooth boundary. The (macroscopic) Maxwell's equations for the electromagnetic field in a material filling the domain $\dom$, without (macroscopic) densities of charge or current, are
\begin{equation}\label{maxwell}
\left\{\begin{array}{l}
\crl\vec{\mathcal{E}}=-\pr_t\vec{\mathcal{B}},\quad
\crl\vec{\mathcal{H}} = \pr_t\vec{\mathcal{D}},\\[5pt]
\dv\vec{\mathcal{D}}=0,\quad
\dv\vec{\mathcal{B}}=0,\\[5pt]
\vec{\mathcal{D}}=\underline{\epsilon}\vec{\mathcal{E}}+\vec{\mathscr{P}}_{NL}(\vec{\mathcal{E}}),\\[5pt]
\vec{\mathcal{B}}=\underline{\mu}\vec{\mathcal{H}}+\vec{\mathscr{M}}_{NL}(\vec{\mathcal{H}}).
\end{array}\right.
\end{equation}
For a linear medium, $\vec{\mathscr{P}}_{NL}(\vec{\mathcal{E}})=\vec{\mathscr{M}}_{NL}(\vec{\mathcal{H}})=0$, and the system takes the familiar form that has been studied extensively both from the point of view of the forward problem and also of the inverse problem.  
Nonlinear effects have been observed in practice, as (for example) the extensive literature on nonlinear optics indicates. As an example, see \cite{M}, \cite{MHRW}, \cite{S}, where nonlinearities of the kind appearing in this paper are put forward. 

We will consider time-harmonic fields of the form\footnote{The $\ast$ denotes complex conjugation.}
\begin{equation}
\vec{\mathcal{E}}(t,x)=\vec E(x)e^{-i\omega t}+\vec E^\ast(x) e^{i\omega t},\quad
\vec{\mathcal{H}}(t,x)=\vec H(x)e^{-i\omega t}+\vec H^\ast(x) e^{i\omega t},
\end{equation}
where $\omega>0$ will be a given fixed frequency. At high frequency, the system \eqref{maxwell} may be taken to reduce to
\begin{equation}\label{eq}
\left\{\begin{array}{l}
\crl\vec E(x)= i\omega\mu(\omega, x) \vec H(x)+\Y(\omega,x,|\vec H(x)|^2)\vec H(x),\\[5pt]
\crl\vec H(x)=- i\omega\epsilon(\omega, x) \vec E(x)-\X(\omega,x,|\vec E(x)|^2)\vec E(x).
\end{array}\right.
\end{equation}
A common model is that of a Kerr-type nonlinearity:
\begin{equation}
\X(x,|\vec E(x)|^2)\vec E(x)=a(x)|\vec E(x)|^2\vec E(x).
\end{equation}
The inverse problem for a model in which both $\X$ and $\Y$ have this form has been investigated in \cite{AZ}.
However, more realistic models feature a saturation effect for $\X$ when the field intensity is high (see \cite{M}, \cite{MHRW}). One example, given in \cite{S}, is
\begin{equation}
\X(x,|\vec E(x)|^2)\vec E(x)=\frac{a(x)|\vec E(x)|^2}{1+b(x)|\vec E(x)|^2}\vec E(x).
\end{equation}
A more complicated model is deduced in \cite{MHRW}.

In this paper it will be assumed that $\X(x,s)$, $\Y(x,s)$ are analytic in $s$, having expansions at zero 
 \begin{equation}
 \X(x,s)=\sum_{k=1}^\infty a_k(x) s^k,\quad  \Y(x,s)=\sum_{k=1}^\infty b_k(x) s^k,
 \end{equation}
 and
 \begin{equation}\label{assumption-1}
 \epsilon,\mu\in C^5(\dom),\quad a_k,b_k\in C^1(\dom) 
 \end{equation}
 \begin{equation}\label{assumption-2}
 \re \epsilon,\re\mu>\lambda>0,
 \end{equation}
 \begin{equation}\label{assumption-3}
  ||\epsilon||_{W^{5,\infty}(\dom)},||\mu||_{W^{5,\infty}(\dom)} <M<\infty,
 \end{equation}
 \begin{equation}\label{assumption-4}
 \sum_{k=1}^\infty \left(||a_k||_{W^{1,\infty}(\dom)}+ ||b_k||_{W^{1,\infty}(\dom)}\right)s^k < M s,\quad \forall\;0<s<s_0,
 \end{equation}
 \begin{equation}\label{assumption-5}
 \sum_{k=1}^\infty k\left(||a_k||_{W^{1,\infty}(\dom)}+||b_k||_{W^{1,\infty}(\dom)}\right)s^{k-1} < M,\quad \forall\;0<s<s_0,
 \end{equation}
  \begin{equation}\label{assumption-6}
 \sum_{k=2}^\infty k(k-1)\left(||a_k||_{L^\infty(\dom)}+||b_k||_{L^\infty(\dom)}\right)s^{k-2} < M,\quad \forall\;0<s<s_0,
 \end{equation}
 where $\lambda$, $M$, $s_0$ are positive constants.
 
 A note on notation: in order to make equations easier to read, the explicit dependence on $x$ of various quantities will be suppressed. For example,  $\X(|\vec E|^2)$ will stand for $\X(x,|\vec E(x)|^2)$ or $\X(\cdot, |\vec E(\cdot)|^2)$.
 
 \subsection{The forward problem}
 
 We will say that a vector field belongs to $L^p(\dom)$, $W^{s,p}(\dom)$, etc. if each component belongs to those respective spaces.
Let
\begin{equation}
W^{s,p}_{div}(\dom)=\left\{\vec A\in W^{s,p}(\dom): \dv\vec A\in W^{s,p}(\dom)\right\},
\end{equation} 
with the natural choice of norms.
If  $\vec n$ is the outer unit normal to $\pr\dom$, let
\begin{equation}
TW^{s,p}(\pr\dom)=\left\{\vec A\in W^{s,p}(\pr\dom): \vec n\cdot \vec A=0\right\},
\end{equation}
\begin{equation}
TW^{s,p}_{div}(\pr\dom)=\left\{\vec A\in TW^{s,p}(\pr\dom): div(\vec A)\in W^{s,p}(\pr\dom)\right\},
\end{equation}
where $div(\vec A)$ is the divergence  associated with the metric  induced on the boundary by the Euclidean metric of $\R^3$. For a smooth vector field $\vec A$ on $\dom$, let
\begin{equation}
\ttr(\vec A)=-\vec n\times(\vec n\times\vec A|_{\pr\dom}),
\end{equation}
i.e. the component tangential to the boundary of the restriction of $\vec A$. $\ttr$ clearly extends to a bounded operator from $W^{s,p}(\dom)$ to $TW^{s-1/p,p}(\pr\dom)$. Let $W^{1,p}_{b}(\dom)=\ttr^{-1}\left(TW^{1-1/p,p}_{div}(\pr\dom)\right)$, with the norm
\begin{equation}
||\vec A||_{W^{1,p}_b(\dom)}=||\vec A||_{W^{1,p}(\dom)}+||\ttr(\vec A)||_{TW^{1-1/p,p}_{div}(\pr\dom)}.
\end{equation}
Finally, let
\begin{equation}
W^{1,p}_D(\dom)=\ttr^{-1}(0), \quad ||\cdot||_{W^{1,p}_D(\dom)}=||\cdot||_{W^{1,p}(\dom)}.
\end{equation}
 
 Before discussing the inverse problem a well-posedness result for the forward problem is necessary. In section \ref{forward} it will be proven that:
 
\begin{thm}\label{n-existence}
For $3< p\leq 6$ there exists a discrete set $\Sigma\subset\C$ and a constant $\m>0$ such that if $\omega\not\in\Sigma$ and $\vec f\in TW^{1-1/p,p}_{div}(\pr\dom)$, $||\vec f||_{TW^{1-1/p,p}_{div}(\pr\dom)}<\m$ there exists a unique solution $\bold U=(\vec E,\vec H)\in W^{1,p}_{b}(\dom)\times W^{1,p}_{b}(\dom)$ of the system
\begin{equation}
\left\{\begin{array}{l}
\crl\vec E= i\omega\mu \vec H + \Y(|\vec H|^2)\vec H,\\[5pt]
\crl\vec H=- i\omega\epsilon \vec E- \X(|\vec E|^2)\vec E,
\end{array}\right.
\end{equation}
such that $\ttr(\vec E)=\vec f$ and
\begin{equation}
||\vec E||_{W^{1,p}_{b}(\dom)}+||\vec H||_{W^{1,p}_{b}(\dom)}\leq C||\vec f||_{TW^{1-1/p,p}_{div}(\pr\dom)},
\end{equation}
where $C>0$ is a constant that does not depend on $\vec f$.
\end{thm}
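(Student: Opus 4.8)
The plan is to set up a fixed point iteration based on the linearized Maxwell system, treating the nonlinear terms as a perturbation. The core idea is that for small boundary data $\vec f$, the solution is small, and so the nonlinear terms $\X(|\vec E|^2)\vec E$ and $\Y(|\vec H|^2)\vec H$ are higher order. First I would establish the well-posedness theory for the linear problem: given $\vec g_1,\vec g_2\in L^p(\dom)$ and boundary data $\vec f$, solve
\begin{equation}
\crl\vec E= i\omega\mu \vec H + \vec g_1,\quad \crl\vec H=- i\omega\epsilon \vec E + \vec g_2,\quad \ttr(\vec E)=\vec f.\nonumber
\end{equation}
This is the step where the discrete set $\Sigma\subset\C$ and the restriction $\omega\not\in\Sigma$ enters, via a Fredholm argument for the linear Maxwell operator, giving a bounded solution operator $\el$ from $(\vec g_1,\vec g_2,\vec f)$ to $(\vec E,\vec H)\in W^{1,p}_b\times W^{1,p}_b$ for those frequencies. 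The $W^{1,p}$ regularity with $3<p\le 6$ is chosen precisely so that $W^{1,p}(\dom)\hookrightarrow C^0(\bar\dom)$ by Sobolev embedding (since $p>3$), which makes pointwise products and the nonlinear substitution manageable.

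Next I would define the nonlinear map $\mathscr{N}(\vec E,\vec H)=\el(-\Y(|\vec H|^2)\vec H,\,-\X(|\vec E|^2)\vec E,\,\vec f)$ on a small ball $B_\rho\subset W^{1,p}_b\times W^{1,p}_b$, and show it is a contraction there. The key estimates are the Lipschitz bounds for the nonlinear substitution operators on $W^{1,p}$. Using the Sobolev embedding into $C^0$ together with the algebra/product structure, and the convergence conditions \eqref{assumption-4}, \eqref{assumption-5}, \eqref{assumption-6} which control $\X$, its $s$-derivative, and its second $s$-derivative uniformly for small field intensity, I would show that for $\|\vec U\|,\|\vec U'\|\le\rho$ with $\rho$ small,
\begin{equation}
\|\X(|\vec E|^2)\vec E-\X(|\vec E'|^2)\vec E'\|_{L^p}\le C\rho^2\|\vec E-\vec E'\|_{W^{1,p}},\nonumber
\end{equation}
and analogously for the $W^{1,p}$-norm of the difference (where the $C^1$ assumption on $a_k,b_k$ and \eqref{assumption-5} are needed to differentiate the composition). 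Combined with the bound $\|\el\|\le C$, this gives that $\mathscr{N}$ maps $B_\rho$ to itself and contracts, provided $\|\vec f\|<\m$ with $\m$ chosen small enough (depending on $\rho$, $C$, $M$, $s_0$). The Banach fixed point theorem then yields the unique solution in $B_\rho$, and the a priori estimate follows by absorbing the nonlinear term into the left side of $\|\bold U\|\le C\|\vec f\|+C\rho^2\|\bold U\|$.

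The main obstacle I anticipate is the linear theory, specifically establishing the bounded solution operator $\el$ in the $W^{1,p}_b$ setting with divergence-type boundary control and identifying $\Sigma$. Maxwell's system is not elliptic in the usual sense, so one cannot simply invoke standard $L^p$ elliptic estimates; the natural approach is to augment the system to a second-order elliptic problem (e.g. by taking another curl and using $\crl\crl=\nabla\dv-\Delta$ together with the divergence conditions forced by the equations, since $\dv(\mu\vec H)$ and $\dv(\epsilon\vec E)$ are determined), then deduce $W^{1,p}$ estimates from elliptic regularity for the vector Laplacian with appropriate boundary conditions, and finally package the frequency dependence into a meromorphic Fredholm framework whose poles form the discrete set $\Sigma$. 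Handling the boundary space $TW^{1-1/p,p}_{div}(\pr\dom)$ correctly—ensuring the trace map and its divergence behave well and that the extension from the boundary data respects the $W^{1,p}_b$ norm—is the delicate technical point, and the $C^5$ regularity in \eqref{assumption-1}, \eqref{assumption-3} is presumably what is needed to push the elliptic estimates through with the variable coefficients $\epsilon,\mu$.
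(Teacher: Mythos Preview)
Your proposal is correct and follows essentially the same route as the paper: split off the linear solution with the given boundary data, treat the nonlinearity as a source for the inhomogeneous linear problem, prove a $W^{1,p}$ Lipschitz estimate for $\bold U\mapsto\F(\bold U)$ using the embedding $W^{1,p}(\dom)\hookrightarrow L^\infty(\dom)$ for $p>3$ together with assumptions \eqref{assumption-4}--\eqref{assumption-6}, and close by the contraction mapping theorem. The only difference is that the paper does not develop the linear $W^{1,p}$ theory itself but quotes it from \cite{AZ} (their Theorems~3.1 and~3.2, reproduced here as Theorems~\ref{h-existence} and~\ref{inh-existence}), whereas you anticipate having to prove it; your sketch of how to do so (reduce to a second-order elliptic system via $\crl\crl$ and the induced divergence relations, then run a Fredholm argument to isolate the discrete exceptional set $\Sigma$) is indeed the strategy used in that reference.
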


The proof of this result follows from estimates for the linear system obtained in \cite{AZ} and a standard contraction principle argument.

\subsection{The inverse problem}

An inverse boundary value problem consists of the question of determining the interior physical properties of a possibly non-homogeneous object from measurements taken on the boundary of the object. A fundamental sub-problem is the question of uniqueness: if two objects of the same shape give the same boundary measurement data, does it follow that their (relevant) physical properties are identical in the interior?

For time-harmonic electromagnetic fields in media in which \eqref{eq} applies, in light of Theorem \ref{n-existence} we can then define the set of boundary measurements
\begin{multline}
\mathcal{B}_{\epsilon,\mu,\F}=\left\{(\ttr(\vec E),\ttr(\vec H))\in TW^{1-1/p,p}_{div}(\pr\dom)\times TW^{1-1/p,p}_{div}(\pr\dom)\right. \\ 
\left.: (\vec E, \vec H)\text{ is a solution of \eqref{eq}}           \right\}.
\end{multline}
In section \ref{inverse} we prove that
\begin{thm}\label{uniqueness}
Suppose $(\epsilon,\mu,\F)$ and $(\epsilon,\mu,\F)$ are as above, $\omega\not\in\Sigma\cup\Sigma'$, and $\mathcal{B}_{\epsilon,\mu,\F}=\mathcal{B}_{\epsilon',\mu',\F'}$. Then $(\epsilon,\mu,\F)=(\epsilon',\mu',\F')$.
\end{thm}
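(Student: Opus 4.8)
The plan is to use the method of higher-order linearization. By Theorem~\ref{n-existence}, for boundary data of the form $\vec f=\sum_{j=1}^N\varepsilon_j\vec f_j$ with $\vec f_j\in TW^{1-1/p,p}_{div}(\pr\dom)$ and the $\varepsilon_j\in\C$ sufficiently small, there is a unique small solution $(\vec E,\vec H)$. The analyticity hypotheses on $\X,\Y$ together with the uniform summability bounds \eqref{assumption-4}--\eqref{assumption-6} should make the solution map real-analytic in $(\varepsilon,\bar\varepsilon)$, so that $(\vec E,\vec H)$ admits a convergent expansion in monomials $\varepsilon^\alpha\bar\varepsilon^\beta$. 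The conjugation is essential here: since the nonlinearities depend on $|\vec E|^2=\vec E\cdot\bar{\vec E}$ and $|\vec H|^2=\vec H\cdot\bar{\vec H}$, both holomorphic and antiholomorphic derivatives in the $\varepsilon_j$ enter. The strategy is to differentiate the systems for $(\epsilon,\mu,\F)$ and $(\epsilon',\mu',\F')$ in these parameters, evaluate at $\varepsilon=0$, and exploit the equality $\mathcal{B}_{\epsilon,\mu,\F}=\mathcal{B}_{\epsilon',\mu',\F'}$ order by order.

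First I would carry out the first-order linearization. Applying $\pr_{\varepsilon_j}|_{\varepsilon=0}$ kills every nonlinear term, since the lowest is cubic, and yields the linear time-harmonic Maxwell system
\begin{equation}
\crl\vec E_j=i\omega\mu\vec H_j,\qquad \crl\vec H_j=-i\omega\epsilon\vec E_j,\qquad \ttr(\vec E_j)=\vec f_j
\end{equation}
(one checks $\pr_{\bar\varepsilon_j}|_{\varepsilon=0}$ of the solution vanishes, no conjugate entering at this order). Equality of the measurement sets forces the linearized boundary relation, i.e. the linear Dirichlet-to-Neumann (admittance) map, to agree for the two coefficient pairs. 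By the known uniqueness theorem for the \emph{linear} Maxwell inverse problem underlying the framework of \cite{AZ}, this gives $\epsilon=\epsilon'$ and $\mu=\mu'$. In particular the two linear systems coincide, so their complex geometric optics (CGO) solutions and first-order fields $\vec E_j,\vec H_j$ are identical.

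Next I would recover the Taylor coefficients $a_k,b_k$ by induction on $k$ using the $(2k{+}1)$-st order linearization. Since $(\vec E\cdot\bar{\vec E})^k\vec E$ is of type $(k{+}1,k)$ in $(\vec E,\bar{\vec E})$, the coefficient $a_k$ first appears under a derivative with $k{+}1$ holomorphic and $k$ antiholomorphic factors, e.g. $\pr_{\varepsilon_1}\cdots\pr_{\varepsilon_{k+1}}\pr_{\bar\varepsilon_{k+2}}\cdots\pr_{\bar\varepsilon_{2k+1}}|_{\varepsilon=0}$. Assuming $a_1,\dots,a_{k-1}$ and $b_1,\dots,b_{k-1}$ already determined, so that all lower-order source contributions match, this differentiation produces for the difference of solutions a linear Maxwell system whose source is built from $(a_k-a_k')$, $(b_k-b_k')$ and products of the first-order fields $\vec E_j,\bar{\vec E}_l$ (respectively $\vec H_j,\bar{\vec H}_l$). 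Pairing with CGO solutions of the linear system and integrating by parts (an Alessandrini-type identity), the boundary terms drop out because the data agree, leaving an identity of the form
\begin{equation}
\int_\dom (a_k-a_k')\,\Pi_k\cdot\vec G\dd x=0,
\end{equation}
where $\Pi_k$ is a symmetrized product of the first-order fields $\vec E_j,\bar{\vec E}_l$ and $\vec G$ ranges over CGO solutions; an analogous identity holds for $b_k-b_k'$ with the $\vec H$-fields. Choosing the polarizations and phases of the CGO solutions so that these vector products form a complete family then yields $a_k=a_k'$ and $b_k=b_k'$ pointwise, and the induction gives $\X=\X'$, $\Y=\Y'$.

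The main obstacle is twofold. First, the nonlinearity is not holomorphic in the field because of the modulus $|\vec E|^2=\vec E\cdot\bar{\vec E}$; tracking which mixed $\pr_\varepsilon\pr_{\bar\varepsilon}$ derivative isolates a given $a_k$, and separating the electric coefficient $a_k$ from the magnetic coefficient $b_k$ (which sit in different equations), is delicate bookkeeping. Second, and more substantially, one must show that the vector products of linear Maxwell solutions appearing in the source span enough to recover the scalar coefficients pointwise; this requires CGO solutions for the linear Maxwell system with prescribed leading polarizations (in the spirit of Ola--P\"aiv\"arinta--Somersalo), control of their remainders, and a completeness-of-products argument that is harder than in the scalar Schr\"odinger setting because of the vectorial structure. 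The uniform bounds \eqref{assumption-4}--\eqref{assumption-6} are precisely what is needed to justify interchanging the infinite nonlinear series with the linearization and to keep all remainders controlled uniformly throughout the induction.
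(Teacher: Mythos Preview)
Your proposal is correct and follows essentially the same strategy as the paper: first-order linearization recovers $\epsilon,\mu$ via the known linear Maxwell uniqueness result, and then an Alessandrini-type integral identity at order $2k{+}1$, combined with CGO solutions of the linear system, recovers $a_k,b_k$ inductively. The paper's implementation differs only in packaging---it uses single-parameter asymptotics and then polarizes with three complex parameters $t_1,t_2,t_3$ (extracting the coefficient of $t_1t_2^kt_3^{\ast k}$) rather than introducing $2k{+}1$ separate $\varepsilon_j$---and it resolves the two obstacles you flag concretely by constructing CGO solutions with independently switchable leading electric and magnetic amplitudes $\sigma_e,\sigma_h\in\{0,1\}$ (taking $\sigma_h=0$ isolates $a_k$ from $b_k$) and by choosing four phases $\vec\zeta_0,\dots,\vec\zeta_3$ so that the surviving integral is exactly the Fourier transform of $(a_k-a_k')/(|\epsilon|^k\epsilon)$ at an arbitrary $\xi\in\R^3$.
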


The inverse boundary value problem has been studied extensively in the linear case. See  for example \cite{SunU-2}, \cite{CP}, \cite{OPS}, \cite{OS}, \cite{COS}, \cite{C}, \cite{KSU}, \cite{Z} etc. Uniqueness results similar to Theorem \ref{uniqueness} for nonlinear equations have been obtained in \cite{IS}, \cite{IN}, \cite{Sun1}, \cite{SunU},\cite{I1}, \cite{I2}, \cite{HS}, \cite{Sun2} using a linearization method. Here we will follow an idea from \cite{AZ} and use the asymptotics in a small parameter $t$ of solutions of \eqref{eq} with boundary data $\ttr(\vec E)=t\vec f$ in order to inductively prove uniqueness for the coefficients of the nonlinearity. We will also need to use certain special solutions, so called geometric optics (CGO) solutions, which we construct following the method in \cite{C}.

 \section{The forward problem}\label{forward}

\subsection{Preliminaries}

The existence and uniqueness of $W^{1,p}$ solutions to the linear Maxwell system, for $p>2$, has been investigated in \cite{AZ}. First we quote an existence result for the boundary value problem for the homogeneous system:

\begin{thm}[see {\cite[Theorem 3.1]{AZ}}]\label{h-existence}
For $2\leq p\leq 6$ there exists a discrete set $\Sigma\subset\C$ such that if $\omega\not\in\Sigma$ and $\vec f\in TW^{1-1/p,p}_{div}(\pr\dom)$ there exists a unique solution $(\vec E,\vec H)\in W^{1,p}_{b}(\dom)\times W^{1,p}_{b}(\dom)$ of the system
\begin{equation}
\left\{\begin{array}{l}
\crl\vec E= i\omega\mu \vec H,\\[5pt]
\crl\vec H=- i\omega\epsilon \vec E,
\end{array}\right.
\end{equation}
such that $\ttr(\vec E)=\vec f$ and
\begin{equation}
||\vec E||_{W^{1,p}_{b}(\dom)}+||\vec H||_{W^{1,p}_{b}(\dom)}\leq C||\vec f||_{TW^{1-1/p,p}_{div}(\pr\dom)},
\end{equation}
where $C>0$ is a constant that does not depend on $\vec f$.
\end{thm}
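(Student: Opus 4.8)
The plan is to reduce the first-order system to an elliptic, coercive problem to which the analytic Fredholm theorem applies, and then to upgrade the resulting $L^2$-type solvability to the $W^{1,p}$ scale via $L^p$ elliptic estimates. First I would remove the boundary data by lifting: choose an extension $\vec E_0\in W^{1,p}_b(\dom)$ with $\ttr(\vec E_0)=\vec f$ and $\|\vec E_0\|_{W^{1,p}_b(\dom)}\lesssim\|\vec f\|_{TW^{1-1/p,p}_{div}(\pr\dom)}$, which exists by the definition of these spaces as traces. It then suffices to solve the system with $\ttr(\vec E)=0$ and a source term generated by $\vec E_0$. Eliminating the magnetic field through $\vec H=(i\omega\mu)^{-1}\crl\vec E$ gives the second-order equation $\crl(\mu^{-1}\crl\vec E)-\omega^2\epsilon\vec E=\text{(source)}$. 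A key observation is that the divergence conditions are not independent constraints: taking $\dv$ of each Maxwell equation shows $\dv(\epsilon\vec E)=\dv(\mu\vec H)=0$ automatically whenever $\omega\neq0$, and it is precisely this hidden structure that turns the curl system into an elliptic div--curl system.

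Next I would establish solvability in the $L^2$ setting. The curl--curl operator is not by itself elliptic, since its kernel contains all gradient fields, so the sesquilinear form $\int_\dom \mu^{-1}|\crl\vec E|^2 - \omega^2\int_\dom\epsilon|\vec E|^2$ fails to be coercive on $H(\crl)$. To restore coercivity I would use the Helmholtz/Hodge decomposition to split off the gradient component, working on the divergence-free complement dictated by the constraint $\dv(\epsilon\vec E)=0$ (equivalently, adding a gauge penalty $-\nabla\dv(\epsilon\,\cdot)$ to the principal part). For a suitable value of the frequency parameter---for instance $\omega$ with large $\im\omega$, using $\re\epsilon,\re\mu>\lambda>0$---the form becomes coercive and the operator is boundedly invertible. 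Since the operator depends analytically on $\omega$ (indeed affinely in $\omega^2$) and the lower-order term is compact by the Rellich embedding, the analytic Fredholm theorem produces a discrete set $\sig\subset\C$ off which the operator is invertible, yielding existence and uniqueness in the $H^1$ framework.

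For $\omega\notin\sig$ the remaining task is to bootstrap the $H^1$ solution to $W^{1,p}$. Here I would freeze the coefficients and compare with the constant-coefficient Hodge/Laplacian system, treating the variable-coefficient and lower-order contributions as perturbations, and invoke Calder\'on--Zygmund / Agmon--Douglis--Nirenberg $L^p$ estimates for the elliptic div--curl system. The gain is limited by the Sobolev embedding $W^{1,2}\hookrightarrow L^6$ in three dimensions, which accounts for the range $p\le 6$, while $p\ge 2$ reflects the $L^2$-based starting point. The boundary regularity $\ttr(\vec E)\in TW^{1-1/p,p}_{div}(\pr\dom)$ is controlled by the regularity of the lift together with the boundedness of $\ttr$ from $W^{1,p}(\dom)$ into $TW^{1-1/p,p}(\pr\dom)$, and the a priori estimate with $C$ independent of $\vec f$ follows from the bounded inverse via the open mapping theorem.

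The hard part will be the curl operator's infinite-dimensional kernel: the crux of the argument is to exploit the automatically satisfied divergence constraints to recover an elliptic system, and then to carry the $L^2$ Fredholm theory up to the full $W^{1,p}$ scale. This $L^p$ regularity step, together with verifying that the exceptional set $\sig$ can be taken independent of $p$, is the technically delicate part; the analytic Fredholm machinery and the lifting are comparatively routine once the correct coercive formulation is in place.
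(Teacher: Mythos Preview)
The paper does not prove this statement at all: Theorem~\ref{h-existence} is explicitly quoted from \cite[Theorem~3.1]{AZ} and is used as a black box in the rest of the argument. There is therefore no ``paper's own proof'' to compare your proposal against.

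That said, your sketch is a reasonable outline of how such a result is typically obtained, and is broadly in the spirit of the argument in \cite{AZ}: reduce to a second-order curl--curl problem, exploit the automatic divergence constraints to recover ellipticity of the div--curl system, apply analytic Fredholm theory to get the discrete exceptional set $\Sigma$, and then upgrade from $H^1$ to $W^{1,p}$ using $L^p$ elliptic estimates together with the embedding $W^{1,2}(\dom)\hookrightarrow L^6(\dom)$, which explains the restriction $p\le 6$. The two points you flag as delicate---carrying the Fredholm inverse through the $L^p$ scale and checking that $\Sigma$ is independent of $p$---are indeed where the work lies, and your proposal only gestures at them; to make this into an actual proof you would need to be more precise about the gauge/penalty formulation (the boundary condition $\ttr(\vec E)=0$ interacts with the added divergence term in a nontrivial way) and about the bootstrap mechanism. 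But for the purposes of this paper none of that is needed: the result is simply cited.
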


We also need the following result for the inhomogeneous system:

\begin{thm}[see {\cite[Theorem 3.2]{AZ}}]\label{inh-existence}
For $2\leq p\leq 6$ exists a discrete set $\Sigma\subset\C$ such that if $\omega\not\in\Sigma$ and $\vec J_e, \vec J_m\in W^{0,p}_{div}(\dom)$, $\vec n\cdot\vec J_e|_{\pr\dom}, \vec n\cdot\vec J_m|_{\pr\dom} \in W^{1-1/p,p}(\pr\dom)$,  there exists a unique solution $(\vec E,\vec H)\in W^{1,p}_{D}(\dom)\times W^{1,p}_{b}(\dom)$ of the system
\begin{equation}
\left\{\begin{array}{l}
\crl\vec E= i\omega\mu \vec H+\vec J_m,\\[5pt]
\crl\vec H=- i\omega\epsilon \vec E-\vec J_e,
\end{array}\right.
\end{equation}
such that
\begin{multline}
||\vec E||_{W^{1,p}_{b}(\dom)}+||\vec H||_{W^{1,p}_{b}(\dom)}\leq C\left(||\vec J_e||_{W^{0,p}_{div}(\dom)}+||\vec J_m||_{W^{0,p}_{div}(\dom)}\right.\\
\left.  +||\vec n\cdot\vec J_e|_{\pr\dom}||_{W^{1-1/p,p}(\pr\dom)} + ||\vec n\cdot\vec J_m|_{\pr\dom}||_{W^{1-1/p,p}(\pr\dom)}    \right),
\end{multline}
where $C>0$ is a constant that does not depend on $\vec J_e, \vec J_m$.
\end{thm}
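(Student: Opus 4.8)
The plan is to reduce the inhomogeneous boundary value problem to the homogeneous one already settled in Theorem \ref{h-existence}. It suffices to construct a single solution $(\vec E_0,\vec H_0)$ of the inhomogeneous system lying in $W^{1,p}_D(\dom)\times W^{1,p}_b(\dom)$, with norm controlled by the data; for then any other solution of the stated problem differs from it by a solution of the homogeneous system with $\ttr(\vec E)=0$, which must vanish since $\omega\notin\Sigma$. Uniqueness is thus inherited from Theorem \ref{h-existence}. Once existence and uniqueness are in hand the solution operator is a bounded linear bijection between the relevant Banach spaces, so the estimate follows from the open mapping theorem (alternatively, by tracking constants through the construction).

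For the construction of $\vec E_0$ I would eliminate $\vec H=(i\omega\mu)^{-1}(\crl\vec E-\vec J_m)$ from the first equation and solve the second-order equation
\[
\crl\left(\frac{1}{\mu}\crl\vec E\right)-\omega^2\epsilon\,\vec E=\crl\left(\frac{1}{\mu}\vec J_m\right)-i\omega\vec J_e=:\vec F.
\]
Because $\crl\crl$ is degenerate, I would augment it with a grad-div term of the form $-\nabla\big(\tfrac{1}{\epsilon}\dv(\epsilon\,\cdot)\big)$, so that the principal part becomes, up to the coefficients $\epsilon,\mu\in C^5(\dom)$, the Laplacian; this produces a strongly elliptic second-order system to which standard $L^p$ elliptic theory applies on the range $2\leq p\leq 6$. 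Note that the right-hand side $\vec F$ lies only in $W^{-1,p}(\dom)$, since $\vec J_m\in W^{0,p}_{div}(\dom)$ gives $\crl\vec J_m\in W^{-1,p}(\dom)$ and $\vec J_e\in L^p(\dom)$; the elliptic solver accordingly returns $\vec E_0\in W^{1,p}_D(\dom)$, with $\ttr(\vec E_0)=0$ imposed as the electric boundary condition and the normal data fixed using $\vec n\cdot\vec J_e|_{\pr\dom}\in W^{1-1/p,p}(\pr\dom)$.

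Two verifications remain. First, that the augmented equation really reproduces the Maxwell system, i.e.\ that the auxiliary scalar potential introduced by the grad-div term is zero: taking the divergence of the augmented equation and using the constraint $\dv(\epsilon\vec E_0)=\tfrac{i}{\omega}\dv\vec J_e$ (forced by $\dv\crl\vec H=0$) together with the normal-trace compatibility on $\pr\dom$ shows that this potential solves a homogeneous elliptic problem and hence vanishes. Second, that $\vec H_0:=(i\omega\mu)^{-1}(\crl\vec E_0-\vec J_m)$ has the claimed regularity. From its definition $\vec H_0$ is a priori only in $L^p(\dom)$; but $\crl\vec H_0=-i\omega\epsilon\vec E_0-\vec J_e\in L^p(\dom)$ and $\dv(\mu\vec H_0)=-(i\omega)^{-1}\dv\vec J_m\in L^p(\dom)$, while its tangential trace lies in $TW^{1-1/p,p}_{div}(\pr\dom)$, so an $L^p$ div-curl (Gaffney/Friedrichs-type) estimate upgrades $\vec H_0$ to $W^{1,p}_b(\dom)$. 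This is precisely where the hypotheses $\vec J_m\in W^{0,p}_{div}(\dom)$ and $\vec n\cdot\vec J_m|_{\pr\dom}\in W^{1-1/p,p}(\pr\dom)$ enter.

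The main obstacle I anticipate is not smoothness---$\epsilon,\mu\in C^5(\dom)$ is far more than the second-order theory needs---but the $L^p$ functional-analytic bookkeeping for $p>2$. In the Hilbert-space case $p=2$ one has clean energy methods and the Helmholtz decomposition, whereas for $p>2$ one must rely on the $L^p$ div-curl regularity estimate and $L^p$ trace theory, and verify at each step that the divergence constraints and normal-trace conditions propagate so that the augmented elliptic solution genuinely reduces to a Maxwell solution in $W^{1,p}_b$. The upper restriction $p\leq 6$ reflects the Sobolev embedding in three dimensions underlying these estimates. As this is the inhomogeneous analogue of the reduction already used for Theorem \ref{h-existence}, the argument follows \cite{AZ} essentially step for step.
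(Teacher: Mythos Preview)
The paper does not prove this statement at all: Theorem~\ref{inh-existence} is quoted verbatim from \cite[Theorem~3.2]{AZ} and is used as a black box. There is therefore no proof in the paper to compare your attempt against. Your sketch is a reasonable outline of the standard reduction---eliminate $\vec H$, augment $\crl\mu^{-1}\crl$ by a grad-div term to obtain a strongly elliptic system, solve via $L^p$ elliptic theory, then recover $\vec H$ by div-curl regularity---and you yourself correctly identify in the last line that this is the argument of \cite{AZ}. Since the paper treats the result as a citation, a detailed proof is not expected here; if anything, your write-up goes beyond what the paper provides.
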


Under the conditions of Theorem \ref{inh-existence}, we will write
\begin{equation}
\begin{pmatrix}\vec E\\ \vec H  \end{pmatrix}=\G_{\epsilon,\mu}(\begin{pmatrix} \vec J_m\\ \vec J_e\end{pmatrix}).
\end{equation}
It is a corollary of Theorem \ref{inh-existence} that $\G_{\epsilon,\mu}$ is bounded from $W^{1,p}(\dom)\times W^{1,p}(\dom)$ to $ W^{1,p}_{D}(\dom)\times W^{1,p}_{b}(\dom)$.

For the sake of simplifying notation let:
\begin{equation}
\bold U=\begin{pmatrix}\vec E\\ \vec H  \end{pmatrix},\quad 
\el_{\epsilon,\mu}=\begin{pmatrix}\crl & -i\omega\mu\\ i\omega\epsilon &\crl \end{pmatrix},\quad
\F(\bold U)=\begin{pmatrix} \Y(|\vec H|^2)\vec H \\ -\X(|\vec E|^2)\vec E  \end{pmatrix}.
\end{equation}
Then equation \eqref{eq} can be written
\begin{equation}
\el_{\epsilon,\mu}\bold U=\F(\bold U).
\end{equation}
Given $\vec f\in TW^{1-1/p,p}_{div}(\pr\dom)$, let
\begin{equation}
\bold U_0=\begin{pmatrix}\vec E_0 & \vec H_0  \end{pmatrix}^t \in W^{1,p}_{b}(\dom)\times W^{1,p}_{b}(\dom)
\end{equation}
be the solution given in Theorem \ref{h-existence}. Then a solution of \eqref{eq} with the boundary condition $\ttr(\vec E)=\vec f$ would be a fixed point of the operator
\begin{equation}\label{T-def}
 \T_{\vec f, \epsilon,\mu}(\bold U)=\bold U_0+\G_{\epsilon,\mu}(\F(\bold U)).
\end{equation}

\subsection{Existence of solutions}

 From now we will only consider $p>3$. Then $W^{1,p}(\dom)\subset L^\infty(\dom)$ and there exists a constant $c>0$ such that
 \begin{equation}
 ||\bold U||_{L^\infty(\dom)}\leq c||\bold U||_{W^{1,p}(\dom)},\quad \forall\;\bold U=\begin{pmatrix} \vec E\\ \vec H\end{pmatrix}\in W^{1,p}(\dom).
 \end{equation}
 
\begin{lem}\label{lem-lip}
Suppose $\bold U,\bold U'\in W^{1,p}(\dom)$, and $||\bold U||_{W^{1,p}(\dom)},||\bold U||_{W^{1,p}(\dom)}\leq \frac{s_0}{c}$, then
\begin{multline}
\left\Vert\F(\bold U)-\F(\bold U')\right\Vert_{W^{1,p}(\dom)}\\
\leq C\left(||\bold U||_{W^{1,p}(\dom)}^2+||\bold U'||_{W^{1,p}(\dom)}^2\right)||\bold U-\bold U'||_{W^{1,p}(\dom)},
\end{multline}
where $C>0$ does not depend on $\bold U$ and $\bold U'$. 
\end{lem}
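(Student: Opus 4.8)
The plan is to prove the estimate separately for each of the two components of $\F$, since they have identical structure; I focus on the lower component $-\X(|\vec E|^2)\vec E$, the bound for $\Y(|\vec H|^2)\vec H$ being verbatim the same with $a_k,\vec E,\vec E'$ replaced by $b_k,\vec H,\vec H'$. The starting point is the Sobolev embedding $W^{1,p}(\dom)\hookrightarrow L^\infty(\dom)$ for $p>3$, which makes $W^{1,p}(\dom)$ a Banach algebra: there is a constant $C_A$ with $\|fg\|_{W^{1,p}}\le C_A\|f\|_{W^{1,p}}\|g\|_{W^{1,p}}$. Passing to the equivalent submultiplicative norm $\|\cdot\|_\ast=C_A\|\cdot\|_{W^{1,p}}$ lets me estimate products of arbitrarily many factors without accumulating constants. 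I also record that multiplication by the $C^1$ coefficient $a_k$ is bounded on $W^{1,p}(\dom)$ with norm controlled by $\|a_k\|_{W^{1,\infty}(\dom)}$, via the Leibniz rule.

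Expanding $\X(|\vec E|^2)\vec E=\sum_{k\ge1}a_k|\vec E|^{2k}\vec E$, the quantity to estimate is $\sum_{k\ge1}a_k\big(|\vec E|^{2k}\vec E-|\vec E'|^{2k}\vec E'\big)$, and the \emph{key algebraic step} is to factor each summand. Since $|\vec E|^{2k}\vec E=(\vec E\cdot\overline{\vec E})^k\vec E$ is, componentwise, a sum of boundedly many monomials of degree $2k+1$ in the entries of $\vec E$ and $\overline{\vec E}$, the difference $|\vec E|^{2k}\vec E-|\vec E'|^{2k}\vec E'$ telescopes into at most $2k+1$ terms, in each of which exactly one factor is a difference $\vec E-\vec E'$ (or its conjugate, whose norm is unchanged) while the remaining $2k$ factors are unprimed or primed fields. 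Estimating each term in the submultiplicative norm gives
\begin{equation*}
\big\||\vec E|^{2k}\vec E-|\vec E'|^{2k}\vec E'\big\|_\ast\le C(2k+1)\,m^{2k}\,\|\vec E-\vec E'\|_\ast,\qquad m:=\max\big(\|\vec E\|_\ast,\|\vec E'\|_\ast\big),
\end{equation*}
with $C$ a purely dimensional constant. Multiplying by $a_k$ and summing then yields $C\|\vec E-\vec E'\|_\ast\sum_{k\ge1}(2k+1)\|a_k\|_{W^{1,\infty}}m^{2k}$, the series converging absolutely by the bound below.

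The final step is to sum and recognize assumption \eqref{assumption-5}. Writing $s=m^2$ and using $2k+1\le 3k$ for $k\ge1$, the smallness hypothesis (which gives $\|\vec E\|_{W^{1,p}},\|\vec E'\|_{W^{1,p}}\le s_0/c$, hence $s<s_0$ after relating $m$ to the $W^{1,p}$ norms) yields
\begin{equation*}
\sum_{k\ge1}(2k+1)\|a_k\|_{W^{1,\infty}}m^{2k}\le 3\,m^2\sum_{k\ge1}k\,\|a_k\|_{W^{1,\infty}}s^{k-1}\le 3M\,m^2 .
\end{equation*}
Since $m^2\le C(\|\vec E\|_{W^{1,p}}^2+\|\vec E'\|_{W^{1,p}}^2)$, undoing the equivalence $\|\cdot\|_\ast\simeq\|\cdot\|_{W^{1,p}}$ produces the stated estimate for the $\X$-component; adding the identical bound for the $\Y$-component and using $\|\vec E\|_{W^{1,p}},\|\vec H\|_{W^{1,p}}\le\|\bold U\|_{W^{1,p}}$ gives the lemma.

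The main obstacle is not any single estimate but the interplay between the degree of the nonlinearity and the growth of its coefficients: one must extract the factor $\vec E-\vec E'$ while producing a multiplicative constant that grows only \emph{linearly} in $k$, since exactly that linear factor is absorbed by the weight $k$ in \eqref{assumption-5}. This forces the telescoping bound rather than a crude triangle inequality, and forces use of a genuinely submultiplicative norm so that the $2k$ undifferentiated factors contribute $m^{2k}$ with no hidden $k$-dependent constant. A secondary, purely bookkeeping, difficulty is arranging the constants (the embedding constant $c$ and the algebra constant $C_A$) so that the argument $s=m^2$ of the power series stays below the radius $s_0$; this is precisely the role of the hypothesis $\|\bold U\|_{W^{1,p}}\le s_0/c$. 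Note finally that, although $|\vec E|^2=\vec E\cdot\overline{\vec E}$ makes $|\vec E|^{2k}\vec E$ only real-analytic and not holomorphic in $\vec E$, the telescoping argument is purely algebraic and insensitive to the conjugation, so no complex-analytic structure is needed here.
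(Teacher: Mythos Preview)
Your argument is correct and takes a genuinely different route from the paper's. The paper never expands the power series term by term: it keeps $\X$ as a single function of $(x,s)$, splits
\[
\X(|\vec E|^2)\vec E-\X(|\vec E'|^2)\vec E'=\X(|\vec E|^2)(\vec E-\vec E')+\bigl(\X(|\vec E|^2)-\X(|\vec E'|^2)\bigr)\vec E',
\]
represents the second factor by the integral $\int_0^1 \pr_s\X(|\vec E_t|^2)\,2\re(\vec E_t^\ast\cdot(\vec E-\vec E'))\,dt$, and then estimates the $L^p$ norm and the $L^p$ norm of the spatial gradient of every piece separately, invoking the pointwise bounds on $\X$, $D_x\X$, $\pr_s\X$, $D_x\pr_s\X$, $\pr_s^2\X$ coming from assumptions \eqref{assumption-4}--\eqref{assumption-6}. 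Your approach instead exploits that $W^{1,p}(\dom)$ is a Banach algebra for $p>3$: you expand the series, telescope each homogeneous term, and sum. This is shorter, avoids the separate treatment of function values and derivatives, and in fact uses only assumption \eqref{assumption-5}; the paper's route also consumes \eqref{assumption-4} and \eqref{assumption-6}. On the other hand, the paper's argument treats $\X$ essentially as a $C^2$ function of $s$, so it would survive weakening the analyticity hypothesis, whereas your argument is tied to the series expansion.

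One point to tighten. The phrase ``a sum of boundedly many monomials of degree $2k+1$'' is not accurate: the multinomial expansion of $(\sum_j E_j\bar E_j)^k$ has a number of terms growing with $k$, which would reinsert a $k$--dependent constant. The fix is simply not to expand: keep $P:=|\vec E|^2$ as a single scalar in the algebra (so $\|P\|_\ast\le \|\vec E\|_\ast^2$ with a fixed dimensional constant, e.g.\ via Cauchy--Schwarz on the three components), and telescope
\[
P^k\vec E-P'^{\,k}\vec E'=(P-P')\sum_{j=0}^{k-1}P^{j}P'^{\,k-1-j}\,\vec E+P'^{\,k}(\vec E-\vec E'),
\]
together with $\|P-P'\|_\ast\le C\,m\,\|\vec E-\vec E'\|_\ast$. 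This yields exactly your bound $C(2k+1)m^{2k}\|\vec E-\vec E'\|_\ast$ with $C$ genuinely independent of $k$, and the rest of your summation via \eqref{assumption-5} goes through unchanged.
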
 
\begin{proof}
Suppose $\bold U= \begin{pmatrix} \vec E&\vec H\end{pmatrix}^t$, $\bold U'= \begin{pmatrix} \vec E'&\vec H'\end{pmatrix}^t$. Consider the difference
\begin{equation}\label{lem-a}
\X(|\vec E|^2)\vec E-\X(|\vec E'|^2)\vec E'=
\X(|\vec E|^2)(\vec E-\vec E')+\left(\X(|\vec E|^2)-\X(|\vec E'|^2)\right)\vec E'.
\end{equation}
Note that
\begin{equation}\label{split}
|\X(x,|\vec E(x)|^2)|\leq \sum_{k=1}^\infty ||a_k||_{L^\infty(\dom)}||\vec E||_{L^\infty(\dom)}^{2k}\leq 
M||\vec E||_{W^{1,p}(\dom)}^2,
\end{equation}
\begin{equation}
|(D_x\X)(x,|\vec E(x)|^2)|\leq M||\vec E||_{W^{1,p}(\dom)}^2,
\end{equation}
\begin{equation}
|(\pr_s\X)(x,|\vec E(x)|^2)|\leq \sum_{k=1}^\infty k||a_k||_{L^\infty(\dom)}||\vec E||_{L^\infty}^{2(k-1)}\leq M.
\end{equation}
Therefore
\begin{equation}\label{lem-b}
\left\Vert\X(|\vec E|^2)(\vec E-\vec E')\right\Vert_{L^p(\dom)}\leq C ||\vec E||_{W^{1,p}(\dom)}^2||\vec E-\vec E'||_{L^p(\dom)}.
\end{equation}
Also, since
\begin{multline}
D_x[\X(|\vec E|^2)(\vec E-\vec E')] =
\X(|\vec E|^2)D_x(\vec E-\vec E')\\
+(D_x\X)(|\vec E|^2)(\vec E-\vec E') +
2\re(\vec E^\ast\cdot D_x\vec E)(\pr_s\X)(|\vec E|^2)(\vec E-\vec E')
\end{multline}
and
\begin{equation}
\left\Vert \X(|\vec E|^2)D_x(\vec E-\vec E')\right\Vert_{L^p(\dom)} \leq M ||\vec E||_{W^{1,p}(\dom)}^2||D_x(\vec E-\vec E')||_{L^p(\dom)},
\end{equation}
\begin{equation}
\left\Vert(D_x\X)(|\vec E|^2)(\vec E-\vec E')\right\Vert_{L^p(\dom)} \leq M ||\vec E||_{W^{1,p}(\dom)}^2||\vec E-\vec E'||_{L^p(\dom)},
\end{equation}
\begin{multline}
\left\Vert2\re(\vec E^\ast\cdot D_x\vec E)(\pr_s\X)(|\vec E|^2)(\vec E-\vec E')\right\Vert_{L^p(\dom)}\\
\leq 2M ||\vec E||_{L^\infty(\dom)}||D_x\vec E||_{L^p(\dom)}||\vec E-\vec E'||_{L^\infty(\dom)}\\
\leq 2c^2M||\vec E||_{W^{1,p}(\dom)}^2||\vec E-\vec E'||_{W^{1,p}(\dom)},
\end{multline}
it follows that
\begin{equation}
\left\Vert \X(|\vec E|^2)(\vec E-\vec E')\right\Vert_{W^{1,p}(\dom)}\leq C ||\vec E||^2_{W^{1,p}(\dom)}||\vec E-\vec E'||_{W^{1,p}(\dom)}.
\end{equation}

In order to estimate the second term in \eqref{split}, let $\vec E_t=\vec E'+t(\vec E-\vec E')$. Then
\begin{equation}\label{int-diff}
\X(|\vec E|^2)-\X(|\vec E'|^2)=\int_0^1 \pr_s\X(|\vec E_t|^2)2\re(\vec E_t^\ast\cdot(\vec E-\vec E'))\dd t.
\end{equation}
We have, for $q=p$ or $q=\infty$, that
\begin{multline}
\left\Vert\pr_s\X(|\vec E_t|^2)2\re(\vec E_t^\ast\cdot(\vec E-\vec E'))\right\Vert_{L^q(\dom)}
\leq C||\vec E_t||_{W^{1,p}(\dom)}||\vec E-\vec E'||_{W^{1,p}(\dom)}\\
\leq C(||\vec E||_{W^{1,p}(\dom)}+||\vec E'||_{W^{1,p}(\dom)})||\vec E-\vec E'||_{W^{1,p}(\dom)}
\end{multline}
Now
\begin{multline}
D_x \left[\pr_s\X(|\vec E_t|^2)2\re(\vec E_t^\ast\cdot(\vec E-\vec E'))\right]
=(D_x\pr_s\X)(|\vec E_t|^2)2\re(\vec E_t^\ast\cdot(\vec E-\vec E'))\\
+\pr_s\X(|\vec E_t|^2)2\re(D_x\vec E_t^\ast\cdot(\vec E-\vec E'))\\
+\pr_s\X(|\vec E_t|^2)2\re(\vec E_t^\ast\cdot D_x(\vec E-\vec E'))\\
+\pr_s^2\X(|\vec E_t|^2)4\re(\vec E_t^\ast\cdot(\vec E-\vec E'))\re(\vec E_t^\ast\cdot D_x\vec E_t).
\end{multline}
Using the same type of estimates as above, we can obtain that
\begin{equation}
\left\Vert(D_x\pr_s\X)(|\vec E_t|^2)2\re(\vec E_t^\ast\cdot(\vec E-\vec E'))\right\Vert_{L^p(\dom)}
\leq C||\vec E_t||_{L^\infty(\dom)}||\vec E-\vec E'||_{L^p(\dom)},
\end{equation}
\begin{equation}
\left\Vert\pr_s\X(|\vec E_t|^2)2\re(D_x\vec E_t^\ast\cdot(\vec E-\vec E'))\right\Vert_{L^p(\dom)}
\leq C||\vec E_t||_{W^{1,p}(\dom)}||\vec E-\vec E'||_{L^\infty(\dom)},
\end{equation}
\begin{equation}
\left\Vert\pr_s\X(|\vec E_t|^2)2\re(\vec E_t^\ast\cdot D_x(\vec E-\vec E'))\right\Vert_{L^p(\dom)}
\leq C||\vec E_t||_{L^\infty(\dom)}||\vec E-\vec E'||_{W^{1,p}(\dom)},
\end{equation}
\begin{multline}
\left\Vert\pr_s^2\X(|\vec E_t|^2)4\re(\vec E_t^\ast\cdot(\vec E-\vec E'))\re(\vec E_t^\ast\cdot D_x\vec E_t)\right\Vert_{L^p(\dom)}\\
\leq C||\vec E_t||^2_{L^\infty(\dom)}||\vec E_t||_{W^{1,p}(\dom)}||\vec E-\vec E'||_{L^\infty(\dom)}\\
\leq C||\vec E_t||_{W^{1,p}(\dom)}||\vec E-\vec E'||_{L^\infty(\dom)}.
\end{multline}
Putting these together with \eqref{int-diff} it follows that
\begin{multline}\label{lem-c}
\left\Vert\left(\X(|\vec E|^2)-\X(|\vec E'|^2)\right)\vec E'\right\Vert_{W^{1,p}(\dom)}\\
\leq C\left( ||\vec E||_{W^{1,p}(\dom)}^2+||\vec E'||_{W^{1,p}(\dom)}^2\right)||\vec E-\vec E'||_{W^{1,p}(\dom)}.
\end{multline}
Equations \eqref{lem-a}, \eqref{lem-b}, \eqref{lem-c} imply
\begin{multline}
\left\Vert\X(|\vec E|^2)\vec E-\X(|\vec E'|^2)\vec E'\right\Vert_{W^{1,p}(\dom)}\\
\leq C\left( ||\vec E||_{W^{1,p}(\dom)}^2+||\vec E'||_{W^{1,p}(\dom)}^2\right)||\vec E-\vec E'||_{W^{1,p}(\dom)}.
\end{multline}
A similar estimate holds for the $\Y$ component of $\F$.
\end{proof}

Since $\F(0)=0$, Lemma \ref{lem-lip} has the corollary
\begin{cor}
If $\bold U\in W^{1,p}(\dom)$, and $||\bold U||_{W^{1,p}(\dom)}\leq \frac{s_0}{c}$, then
\begin{equation}
\left\Vert\F(\bold U)\right\Vert_{W^{1,p}(\dom)} \leq C||\bold U||_{W^{1,p}(\dom)}^3.
\end{equation}
\end{cor}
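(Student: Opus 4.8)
The plan is to derive this directly from Lemma \ref{lem-lip} by specializing to the case $\bold U'=0$. First I would check that the pair $(\bold U,0)$ satisfies the hypotheses of the lemma: the bound $\|0\|_{W^{1,p}(\dom)}=0\leq s_0/c$ holds trivially, and $\bold U$ satisfies the size restriction by assumption, so Lemma \ref{lem-lip} applies with the second argument taken to be the zero field.

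Next I would observe that $\F(0)=0$. This is because the power-series expansions of $\X(x,s)$ and $\Y(x,s)$ begin at $k=1$, so $\X(x,0)=\Y(x,0)=0$; consequently the vector fields $\Y(|\vec H|^2)\vec H$ and $\X(|\vec E|^2)\vec E$ vanish identically when $\vec E=\vec H=0$, and hence $\F(\bold U')=\F(0)=0$. With these two facts the Lipschitz-type estimate of Lemma \ref{lem-lip}, applied to $(\bold U,0)$, collapses to the desired cubic bound:
\begin{multline}
\left\Vert\F(\bold U)\right\Vert_{W^{1,p}(\dom)}=\left\Vert\F(\bold U)-\F(0)\right\Vert_{W^{1,p}(\dom)}\\
\leq C\left(\|\bold U\|_{W^{1,p}(\dom)}^2+0\right)\|\bold U\|_{W^{1,p}(\dom)}=C\|\bold U\|_{W^{1,p}(\dom)}^3,
\end{multline}
since $\|\bold U-0\|_{W^{1,p}(\dom)}=\|\bold U\|_{W^{1,p}(\dom)}$ and the $\|\bold U'\|^2$ contribution drops out.

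There is essentially no obstacle here: the result is a verbatim specialization of the already-established estimate, and the only points needing (brief) verification are that the zero field lies in the admissible range and that $\F$ annihilates it, both of which are immediate from the $k=1$ starting index of the expansions. I would emphasize, however, that the cubic scaling $\left\Vert\F(\bold U)\right\Vert_{W^{1,p}(\dom)}\lesssim\|\bold U\|_{W^{1,p}(\dom)}^3$ is precisely the feature exploited in Theorem \ref{n-existence}: together with the quadratic contraction factor from Lemma \ref{lem-lip}, it makes the fixed-point map $\T_{\vec f,\epsilon,\mu}$ of \eqref{T-def} a contraction on a sufficiently small ball, which is what yields existence and uniqueness for the nonlinear forward problem.
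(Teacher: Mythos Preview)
Your proposal is correct and matches the paper's approach exactly: the paper simply remarks that since $\F(0)=0$, the corollary follows from Lemma \ref{lem-lip}, which is precisely your specialization $\bold U'=0$. Your additional verification of the hypotheses and the closing remark on the role of the cubic bound are accurate but go beyond what the paper states explicitly.
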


\begin{proof}[Proof of Theorem \ref{n-existence}] Applying Lemma \ref{lem-lip} and it's corollary together with Theorems \ref{h-existence} and \ref{inh-existence} we can show that the operator $ \T_{\vec f, \epsilon,\mu}$ defined in \eqref{T-def} is a contraction on a sufficiently small ball in $W^{1,p}_D(\dom)\times W^{1,p}_b(\dom)$, of radius $\m$, and therefore has a fixed point. 
\end{proof}

In the following discussion we will assume that $\m$ is chosen so that if $||\vec f||_{TW^{1-1/p,p}_{div}(\pr\dom)}<\m$, then 
\begin{equation}
||\bold U_0||_{W^{1,p}(\dom)}<\frac{\m}{2},
\end{equation}
and if $||\bold U||_{W^{1,p}(\dom)}<\m$, then
\begin{equation}
||\G_{\epsilon,\mu}(\F(\bold U))||_{W^{1,p}(\dom)}<\frac{\m}{2}.
\end{equation}

\subsection{Asymptotics}

For $\bold U\in W^{1,p}(\dom)=\begin{pmatrix}\vec E\\ \vec H\end{pmatrix}$, define
\begin{equation}
\F_k(\bold U)=\begin{pmatrix} b_k|\vec H|^{2k}\vec H\\ -a_k|\vec E|^{2k}\vec E\end{pmatrix},
\end{equation}
so
$
\F(\bold U)=\sum_{k=1}^\infty \F_k(\bold U).
$

Let $t$ be a small parameter. For  $\vec f\in TW^{1-1/p,p}_{div}(\pr\dom)$, let $\vec f^t=t\vec f$. Also, let $\bold U^t=\begin{pmatrix} \vec E^t \\ \vec H^t\end{pmatrix}$ be the solution of $\el_{\epsilon,\mu}\bold U^t=\F(\bold U^t)$ with boundary data $\ttr(\vec E_t)=\vec f$, and let $\bold U_0^t=\begin{pmatrix} \vec E^t_0 \\ \vec H^t_0\end{pmatrix}$ be the solution of $\el_{\epsilon,\mu}\bold U^t_0=0$ with the same boundary data.  Set $\bold U^t_k=\T^k_{\vec f,\epsilon,\mu}(\bold U_0^t)$, $k=1,2,\ldots$. For  $|t|<\m/||\vec f||_{TW^{1-1/p,p}_{div}(\pr\dom)}$, since $ \T_{\vec f, \epsilon,\mu}$ is a contraction, 
\begin{equation}
||U^t_k||_{W^{1,p}(\dom)}\leq\m\text{ and }\bold U^t_k\to \bold U^t\text{ in }W^{1,p}(\dom),\text{ as }k\to\infty.
\end{equation}

Observe that $\bold U^t_0=t\bold U_0$. Define
\begin{equation}
\bold V^t_1=\begin{pmatrix} \vec B_1^t\\ \vec A^t_1\end{pmatrix}=\bold U^t_1-\bold U_0^t=\G_{\epsilon,\mu}(\F(\bold U_0^t)),
\end{equation}
\begin{equation}
\bold V^t_k=\begin{pmatrix} \vec B_k^t\\ \vec A^t_k\end{pmatrix}=\bold U^t_k-\bold U^t_{k-1}=\G_{\epsilon,\mu}\left(\F(\bold U_{k-2}^t+\bold V^t_{k-1}) -\F(\bold U^t_{k-2})\right).
\end{equation}
Then
\begin{equation}
\bold V^t_1=t^3\G_{\epsilon,\mu}(\F_1(\bold U_0))+t^5\G_{\epsilon,\mu}(\F_2(\bold U_0))+\cdots,
\end{equation}
\begin{multline}
\bold V^t_2= \G_{\epsilon,\mu}(\F_1(t\bold U_0+\bold V^t_1)-\F_1(t\bold U_0))+\cdots\\
=\G_{\epsilon,\mu} ( t^2\begin{pmatrix} b_1|\vec H_0|^2\vec B^t_1 +2\vec H_0\re(\vec H_0\cdot\vec B_1^{t\ast})\\ - a_1|\vec E_0|^2\vec A^t_1 -2\vec E_0\re(\vec E_0\cdot\vec A_1^{t\ast})\end{pmatrix} )+\cdots = \Ord(t^5)
\end{multline}
and so on.

\begin{lem}\label{Vk-estimate}
\begin{equation}
||\bold V^t_k||_{W^{1,p}(\dom)}=\Ord(t^{2k+1}),\text{ as }t\to0.
\end{equation}
\end{lem}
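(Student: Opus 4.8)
The plan is to prove the estimate by induction on $k$, using the cubic bound on $\F$ near the origin (the corollary of Lemma~\ref{lem-lip}), the Lipschitz estimate of Lemma~\ref{lem-lip}, and the boundedness of $\G_{\epsilon,\mu}$ from $W^{1,p}(\dom)\times W^{1,p}(\dom)$ into $W^{1,p}_D(\dom)\times W^{1,p}_b(\dom)$. For the base case $k=1$, recall that $\bold V^t_1=\G_{\epsilon,\mu}(\F(\bold U_0^t))$ with $\bold U_0^t=t\bold U_0$. For $|t|$ small we have $\|\bold U_0^t\|_{W^{1,p}(\dom)}=|t|\,\|\bold U_0\|_{W^{1,p}(\dom)}\le s_0/c$, so the corollary of Lemma~\ref{lem-lip} gives $\|\F(\bold U_0^t)\|_{W^{1,p}(\dom)}\le C\|\bold U_0^t\|_{W^{1,p}(\dom)}^3=C|t|^3\|\bold U_0\|_{W^{1,p}(\dom)}^3$, and the boundedness of $\G_{\epsilon,\mu}$ yields $\|\bold V^t_1\|_{W^{1,p}(\dom)}=\Ord(t^3)=\Ord(t^{2\cdot 1+1})$.

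For the inductive step I would assume $\|\bold V^t_j\|_{W^{1,p}(\dom)}=\Ord(t^{2j+1})$ for all $1\le j\le k-1$. Since $\bold U^t_j=t\bold U_0+\sum_{i=1}^{j}\bold V^t_i$ and each summand with $i\le j\le k-1$ is of order $t^{2i+1}=\Ord(t^3)$, the leading contribution is $t\bold U_0$, whence $\|\bold U^t_j\|_{W^{1,p}(\dom)}=\Ord(t)$ for every $0\le j\le k-1$. Rewriting the defining recursion via $\bold U^t_{k-2}+\bold V^t_{k-1}=\bold U^t_{k-1}$ gives
\begin{equation}
\bold V^t_k=\G_{\epsilon,\mu}\big(\F(\bold U^t_{k-1})-\F(\bold U^t_{k-2})\big),
\end{equation}
and Lemma~\ref{lem-lip} bounds the argument by $C\big(\|\bold U^t_{k-1}\|_{W^{1,p}(\dom)}^2+\|\bold U^t_{k-2}\|_{W^{1,p}(\dom)}^2\big)\,\|\bold V^t_{k-1}\|_{W^{1,p}(\dom)}$. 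The prefactor is $\Ord(t^2)$ by the order bound just established, while $\|\bold V^t_{k-1}\|_{W^{1,p}(\dom)}=\Ord(t^{2k-1})$ by the induction hypothesis, so the product is $\Ord(t^{2k+1})$; applying the boundedness of $\G_{\epsilon,\mu}$ once more closes the induction.

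The argument is essentially bookkeeping, and the only delicate point is that it requires the \emph{uniform} order bound $\|\bold U^t_j\|_{W^{1,p}(\dom)}=\Ord(t)$, not merely $\|\bold U^t_j\|_{W^{1,p}(\dom)}\le\m$. This is why the induction must be run in strong form, assuming the conclusion for all $j\le k-1$ rather than just $j=k-1$, so that all the intermediate fields $\bold U^t_{k-2},\bold U^t_{k-1}$ can be controlled at order $t$. One should also verify that Lemma~\ref{lem-lip} is applicable, i.e. that $\|\bold U^t_{k-1}\|_{W^{1,p}(\dom)},\|\bold U^t_{k-2}\|_{W^{1,p}(\dom)}\le s_0/c$, which holds for $|t|$ sufficiently small precisely because these norms are $\Ord(t)$.
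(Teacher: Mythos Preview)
Your argument is correct and is precisely the induction the paper has in mind; the paper's own proof consists of the single line ``Follows easily by induction.'' You have supplied the details---the cubic bound for the base case, Lemma~\ref{lem-lip} together with the $\Ord(t)$ control on $\bold U^t_{k-1},\bold U^t_{k-2}$ for the step, and the use of strong induction to obtain that control---exactly as intended.
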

\begin{proof}
Follows easily by induction.
\end{proof}

\begin{lem}
\begin{equation}
||\bold U^t-\bold U_k^t||_{W^{1,p}(\dom)}=\Ord(t^{2k+3}),\text{ as }t\to0.
\end{equation}
\end{lem}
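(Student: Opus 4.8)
The plan is to bypass the telescoping sum and instead exploit the fixed-point structure directly, reducing the claim to a one-step contraction estimate whose rate is $\Ord(t^2)$. Both $\bold U^t$ and its $k$-th Picard iterate $\bold U^t_k$ are built from the same affine map $\bold U\mapsto\bold U^t_0+\G_{\epsilon,\mu}(\F(\bold U))$: the former is its fixed point, the latter satisfies $\bold U^t_k=\bold U^t_0+\G_{\epsilon,\mu}(\F(\bold U^t_{k-1}))$. Since $\G_{\epsilon,\mu}$ is linear, the base term $\bold U^t_0$ cancels on subtraction, giving
\begin{equation}
\bold U^t-\bold U^t_k=\G_{\epsilon,\mu}\!\left(\F(\bold U^t)-\F(\bold U^t_{k-1})\right).
\end{equation}
Writing $d_k=\|\bold U^t-\bold U^t_k\|_{W^{1,p}(\dom)}$ and applying Lemma \ref{lem-lip} together with the boundedness of $\G_{\epsilon,\mu}$, I would obtain
\begin{equation}
d_k\leq C\left(\|\bold U^t\|_{W^{1,p}(\dom)}^2+\|\bold U^t_{k-1}\|_{W^{1,p}(\dom)}^2\right)d_{k-1}.
\end{equation}

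The crux, and the second ingredient, is an estimate on the iterates and on the limit that is \emph{uniform in $k$}, namely $\|\bold U^t_k\|_{W^{1,p}(\dom)}\leq Ct$ and $\|\bold U^t\|_{W^{1,p}(\dom)}\leq Ct$ for all small $t$. I would extract this from the corollary of Lemma \ref{lem-lip}: setting $M_k=\|\bold U^t_k\|_{W^{1,p}(\dom)}$ one has $M_k\leq\|\bold U^t_0\|_{W^{1,p}(\dom)}+C M_{k-1}^3$, and since $M_{k-1}\leq\m$ this gives $M_k\leq t\|\bold U_0\|_{W^{1,p}(\dom)}+C\m^2 M_{k-1}$. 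Choosing the contraction ball small enough that $C\m^2\leq\tfrac12$ turns this into $M_k\leq t\|\bold U_0\|_{W^{1,p}(\dom)}+\tfrac12 M_{k-1}$; iterating the geometric recursion from $M_0=t\|\bold U_0\|_{W^{1,p}(\dom)}$ bounds $M_k$ by $3t\|\bold U_0\|_{W^{1,p}(\dom)}$ for every $k$, and letting $k\to\infty$ gives the same bound for $\bold U^t$.

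Combining the two ingredients, the one-step estimate collapses to $d_k\leq C''t^2\,d_{k-1}$ with $C''$ independent of both $k$ and $t$. For the base case, $\bold U^t-\bold U^t_0=\G_{\epsilon,\mu}(\F(\bold U^t))$, so the corollary of Lemma \ref{lem-lip} yields $d_0\leq C\|\bold U^t\|_{W^{1,p}(\dom)}^3=\Ord(t^3)$. Iterating the recursion then produces
\begin{equation}
d_k\leq (C''t^2)^k d_0=\Ord(t^{2k+3}),
\end{equation}
which is exactly the assertion. (Equivalently one could telescope $\bold U^t-\bold U^t_k=\sum_{j>k}\bold V^t_j$ and invoke Lemma \ref{Vk-estimate}, but the geometric ratio $C''t^2$ furnished by the uniform bounds is precisely what guarantees the infinite tail converges at the claimed order.)

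The only genuinely delicate point is the uniform-in-$k$ bound $\|\bold U^t_k\|_{W^{1,p}(\dom)}=\Ord(t)$: Lemma \ref{Vk-estimate} and the corollary provide $\Ord$-estimates whose implied constants could a priori grow with $k$, and without controlling that growth the recursion (or the tail sum) need not close at $\Ord(t^{2k+3})$. Once that uniform bound is in hand, the remainder is a mechanical application of Lemma \ref{lem-lip}, its corollary, and the boundedness of $\G_{\epsilon,\mu}$.
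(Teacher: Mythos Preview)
Your argument is correct but proceeds differently from the paper. You set up a recursion $d_k\leq C''t^2\,d_{k-1}$ by directly applying the Lipschitz estimate of Lemma~\ref{lem-lip} to the pair $(\bold U^t,\bold U^t_{k-1})$, then iterate down to $d_0=\Ord(t^3)$; the key auxiliary step is the uniform-in-$k$ bound $\|\bold U^t_k\|_{W^{1,p}}=\Ord(t)$, which you derive cleanly from the contraction structure. The paper instead makes a single non-iterative step: it writes $\bold u^t_k=\bigl(\T(\bold U^t_k+\bold u^t_k)-\T(\bold U^t_k)\bigr)+\bold V^t_{k+1}$, absorbs the first bracket using only that $\T$ is a contraction (no quantitative rate in $t$ needed), and then invokes Lemma~\ref{Vk-estimate} to bound $\|\bold V^t_{k+1}\|=\Ord(t^{2k+3})$. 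So the paper's route is shorter because it recycles the already-proved Lemma~\ref{Vk-estimate}, while yours is self-contained and in fact bypasses Lemma~\ref{Vk-estimate} entirely (your parenthetical remark about the telescoping tail is unnecessary). The trade-off is that you must first establish the uniform $\Ord(t)$ bound on the iterates, whereas the paper only needs the qualitative fact that $\T$ contracts on the ball of radius $\m$.
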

\begin{proof}
Let $\bold u^t_k=\bold U^t-\bold U_k^t$. Then
\begin{multline}
\bold u^t_k = \G_{\epsilon,\mu}(\F(\bold U^t_k+\bold u_k^t)-\F(\bold U^t_{k-1}))\\
=\G_{\epsilon,\mu}(\F(\bold U^t_k+\bold u_k^t)-\F(\bold U^t_{k})) 
+\G_{\epsilon,\mu}(\F(\bold U^t_k)-\F(\bold U^t_{k-1}))\\
=\left(\T_{\vec f,\epsilon,\mu}(\bold U^t_k+\bold u^t_k)-\T_{\vec f,\epsilon,\mu}(\bold U^t_k)\right)+\bold V^t_{k+1}.
\end{multline}
Since for small enough $t$, $\T_{\vec f,\epsilon,\mu}$ is a contraction, the first term on the right hand side may be absorbed into the left hand side and applying Lemma \ref{Vk-estimate}, the result follows.
\end{proof}

Notice that the terms multiplying $t^{2k+1}$ are the same for all $\bold U_{k'}^t$ with $k'\geq k$. Define then
\begin{equation}
\bold W_k=\left.\frac{1}{(2k+1)!}\pr_t^{2k+1}\bold U_k^t\right|_{t=0}.
\end{equation}
A useful observation is that
\begin{equation}
\bold W_k=\G_{\epsilon,\mu}(\begin{pmatrix} b_k|\vec H_0|^{2k}H_0\\ -a_k|\vec E_0|^{2k}E_0\end{pmatrix})+\left(\text{terms constructed from $\{a_l,b_l\}_{l=1}^{k-1}$ and $\bold U_0$}  \right),
\end{equation}
so
\begin{equation}\label{wk-nonlinearity}
\el_{\epsilon,\mu}\bold W_k=\begin{pmatrix} b_k|\vec H_0|^{2k}H_0\\ -a_k|\vec E_0|^{2k}E_0\end{pmatrix}+\left(\text{terms constructed from $\{a_l,b_l\}_{l=1}^{k-1}$ and $\bold U_0$}  \right).
\end{equation}

\section{The inverse problem}\label{inverse}

Suppose $\mathcal{B}_{\epsilon,\mu,\F}=\mathcal{B}_{\epsilon',\mu',\F'}$. For any $\vec f\in TW^{1-1/p,p}_{div}(\pr\dom)$ let $\bold U_0=\begin{pmatrix} \vec E_0\\ \vec H_0\end{pmatrix}$, $\bold U_0'=\begin{pmatrix} \vec E_0'\\ \vec H_0'\end{pmatrix}$, $\bold W_k= \begin{pmatrix} \vec E_k\\ \vec H_k\end{pmatrix}$, $\bold W_k'= \begin{pmatrix} \vec E_k'\\ \vec H_k'\end{pmatrix}$ be the constructed as in the previous sections using the two sets of coefficients respectively. Then we have
\begin{equation}
\ttr(\vec E_0)=\ttr(\vec E_0')=\vec f,\quad \ttr(\vec H_0)=\ttr(\vec H_0'),
\end{equation}
\begin{equation}\label{same-bv}
\ttr(\vec E_k)=\ttr(\vec E_k')=0,\quad \ttr(\vec H_k)=\ttr(\vec H_k'),\quad k=1,2,\ldots.
\end{equation}
An immediate consequence is that
\begin{multline}
\{ (\ttr(\vec E_0),\ttr(\vec H_0))\in TW^{1-1/p,p}_{div}(\pr\dom)\times TW^{1-1/p,p}_{div}(\pr\dom):\el_{\epsilon,\mu}{\begin{pmatrix}\vec E_0\\ \vec H_0\end{pmatrix}}=0\}\\
=\{ (\ttr(\vec E_0'),\ttr(\vec H_0'))\in TW^{1-1/p,p}_{div}(\pr\dom)\times TW^{1-1/p,p}_{div}(\pr\dom):\el_{\epsilon',\mu'}{\begin{pmatrix}\vec E_0'\\ \vec H_0'\end{pmatrix}}=0\}.
\end{multline}
It is a known result then (e.g. see \cite{OS}, or \cite{C}) that $\epsilon=\epsilon'$ and $\mu=\mu'$. It follows that $\bold U_0=\bold U_0'$.

{Suppose then that $\{a_l,b_l\}_{l=1}^{k-1}=\{a_l',b_l'\}_{l=1}^{k-1}$. We will show that then $a_k=a_k'$ and $b_k=b_k'$. Theorem \ref{uniqueness} will follow by induction.}

\subsection{An integral identity}

For two vector fields $\vec A$ and $\vec B$, we have
\begin{equation}
\int_{\pr\dom}(\vec n\times \vec A)\cdot\vec B= \int_\dom (\nabla\times\vec A)\cdot\vec B-
\vec A\cdot(\nabla\times\vec B).
\end{equation}
Let $\bold u_0=\begin{pmatrix}\vec e_0\\\vec h_0\end{pmatrix}$ be a solution of $\el_{\epsilon,\mu}\bold u_0=0$. Using \eqref{wk-nonlinearity} we get
\begin{multline}
\int_{\pr\dom}(\vec n\times \vec E_k)\cdot \vec h_0= \int_\dom i\omega\mu\vec H_k\cdot\vec h_0+b_k|\vec H_0|^{2k}\vec H_0\cdot\vec h_0 +i\omega\epsilon\vec E_k\cdot\vec e_0\\
+\int_\dom\left(\text{terms constructed from $\{a_l,b_l\}_{l=1}^{k-1}$, $\bold U_0$, and $\bold u_0$}  \right),
\end{multline}
\begin{multline}
\int_{\pr\dom}(\vec n\times \vec H_k)\cdot \vec e_0= -\int_\dom i\omega\epsilon\vec E_k\cdot\vec e_0+a_k|\vec E_0|^{2k}\vec E_0\cdot\vec e_0 +i\omega\mu\vec H_k\cdot\vec h_0\\
+\int_\dom\left(\text{terms constructed from $\{a_l,b_l\}_{l=1}^{k-1}$, $\bold U_0$, and $\bold u_0$}  \right).
\end{multline}
Then
\begin{multline}
\int_{\pr\dom}(\vec n\times \vec E_k)\cdot \vec h_0+(\vec n\times \vec H_k)\cdot \vec e_0=
\int_\dom b_k|\vec H_0|^{2k}\vec H_0\cdot\vec h_0 - a_k|\vec E_0|^{2k}\vec E_0\cdot\vec e_0\\
+\int_\dom\left(\text{terms constructed from $\{a_l,b_l\}_{l=1}^{k-1}$, $\bold U_0$, and $\bold u_0$}  \right).
\end{multline}
Subtracting the corresponding identities for the components of $\bold W_k'$ and using \eqref{same-bv} we have
\begin{equation}
I_k(\bold U_0,u_0)= \int_\dom (b_k-b_k')|\vec H_0|^{2k}\vec H_0\cdot\vec h_0 - (a_k-a_k')|\vec E_0|^{2k}\vec E_0\cdot\vec e_0=0,
\end{equation}
which holds for any $\bold u_0$, $\bold U_0$ solutions of the homogeneous linear equation.

Let $\bold u_j =\begin{pmatrix} \vec e_j\\\vec h_j\end{pmatrix}\in W^{1,p}_{b}(\dom)\times W^{1,p}_{b}(\dom)$ all satisfy $\el_{\epsilon,\mu}\bold u_j=0$, $j=1,2,3$, then
\begin{equation}
I_k(t_1\bold u_1+t_2\bold u_2+t_3\bold u_3,u_0)=0,\quad \forall t_1,t_2,t_3\in\C.
\end{equation}
The left hand side of this identity is  a polynomial in $t_1,t_2,t_3,t_1^\ast,t_2^\ast,t_3^\ast$, so the coefficient of  each independent monomial must vanish. In particular, the coefficient of $t_1 t_2^kt_3^{\ast k}$ must be zero.
The vanishing quantity is
\begin{multline}\label{integral-identity}
\int_\dom (b_k-b'_k)\left[\vec h_0\cdot\vec h_1 (\vec h_2\cdot\vec h_3^\ast)^k+k\vec h_0\cdot\vec h_2(\vec h_1\cdot\vec h_3^\ast) (\vec h_2\cdot\vec h_3^\ast)^{k-1}\right]\\
-\int_\dom (a_k-a'_k)\left[\vec e_0\cdot\vec e_1 (\vec e_2\cdot\vec e_3^\ast)^k+k\vec e_0\cdot\vec e_2(\vec e_1\cdot\vec e_3^\ast) (\vec e_2\cdot\vec e_3^\ast)^{k-1}\right]=0
\end{multline}

\subsection{CGO solutions for the linear Maxwell system}

CGO solutions for the linear Maxwell system have been constructed in many past works. The method given here is due to \cite{OPS}, \cite{OS}. We will mostly follow the construction as given in \cite{C}, summarizing the results when the argument proceeds identically and giving more detail when not. We show that
\begin{prop}\label{cgo}
There exists a constant $C(\rho, ||\epsilon||_{W^{5,\infty}(\dom)}, ||\mu||_{W^{5,\infty}(\dom)})>0$ such that if ${\vec\zeta}\in \C^3$, ${\vec\zeta}\cdot{\vec\zeta}=\omega^2$,
\begin{equation}
|{\vec\zeta}|>C(\rho, ||\epsilon||_{W^{5,\infty}(\dom)}, ||\mu||_{W^{5,\infty}(\dom)}),
\end{equation}
then there exist solutions $\bold U=\begin{pmatrix} \vec E\\\vec H\end{pmatrix}$ of $\el_{\epsilon,\mu}\bold U=0$ such that
\begin{equation}
\vec E= e^{i\vec \zeta\cdot x}\left(\sigma_e\epsilon^{-1/2}\frac{\vec \zeta}{|\zeta|}+\vec r_e\right),
\end{equation}
\begin{equation}
\vec H= e^{i\vec \zeta\cdot x}\left(\sigma_h\mu^{-1/2}\frac{\vec \zeta}{|\zeta|}+\vec r_h\right),
\end{equation}
\begin{equation}
||\vec r_e||_{L^\infty(\dom)},||\vec r_h||_{L^\infty(\dom)} =\Ord(|\vec \zeta|^{-1}),
\end{equation}
and $\sigma_e,\sigma_h\in\{0,1\}$.
\end{prop}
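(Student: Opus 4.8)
The plan is to reduce the first-order Maxwell system to a Dirac-type system with constant-coefficient principal part and then build the exponentially growing solutions by a Faddeev-type parametrix together with a contraction argument. First I would symmetrize by setting $\vec e=\epsilon^{1/2}\vec E$ and $\vec h=\mu^{1/2}\vec H$, so that the announced leading amplitudes $\sigma_e\epsilon^{-1/2}\vec\zeta/|\zeta|$, $\sigma_h\mu^{-1/2}\vec\zeta/|\zeta|$ become the constant longitudinal vectors $\sigma_e\vec\zeta/|\zeta|$, $\sigma_h\vec\zeta/|\zeta|$; these are precisely the directions annihilated at leading order by $\crl$, since $\vec\zeta\times\vec\zeta=0$. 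Writing the two curl equations for $(\vec e,\vec h)$ together with the divergence constraints $\dv(\epsilon\vec E)=\dv(\mu\vec H)=0$, and adjoining two scalar gauge unknowns, the system can be recast as $(P+W)\mathbf X=0$, where $\mathbf X\in\C^8$, $P$ is a first-order constant-coefficient Dirac operator, and $W$ is a zeroth-order matrix multiplication operator whose entries are built from $\omega$, $\omega\sqrt{\epsilon\mu}$ and the logarithmic gradients $\nabla\log\epsilon$, $\nabla\log\mu$. This is the reduction of \cite{OPS}, \cite{OS} as implemented in \cite{C}; the $W^{5,\infty}$ hypotheses on $\epsilon,\mu$ are what guarantee that $W$ and the iterates below are regular enough for the remainder estimates.

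Next I would make the ansatz $\mathbf X=e^{i\vec\zeta\cdot x}(\mathbf X_0+\mathbf R)$ with $\mathbf X_0$ a constant vector lying in the kernel of the principal symbol of $P$ evaluated at $\vec\zeta$; because $\vec\zeta\cdot\vec\zeta=\omega^2$ this kernel is spanned by an electric and a magnetic null mode, and the two free parameters $\sigma_e,\sigma_h\in\{0,1\}$ simply select which modes are switched on. Conjugating by $e^{i\vec\zeta\cdot x}$ turns $P$ into $P+i\,\mathrm{symb}_P(\vec\zeta)$; since $\mathbf X_0$ is annihilated by the symbol term, the remainder solves
\begin{equation}
\left(P+i\,\mathrm{symb}_P(\vec\zeta)\right)\mathbf R=-W(\mathbf X_0+\mathbf R).
\end{equation}

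To solve this I would invert the conjugated Dirac operator. Multiplying by its conjugate factor reduces it to the scalar Faddeev operator $\Delta+2i\vec\zeta\cdot\nabla$, for which the Sylvester--Uhlmann/Faddeev resolvent $G_{\vec\zeta}$ exists on the relevant weighted spaces once $|\vec\zeta|$ is large, with the gain $\|G_{\vec\zeta}f\|\leq C|\vec\zeta|^{-1}\|f\|$. This produces a right inverse $Q_{\vec\zeta}$ of $P+i\,\mathrm{symb}_P(\vec\zeta)$ of norm $\Ord(|\vec\zeta|^{-1})$, so the remainder equation becomes the fixed-point problem $\mathbf R=-Q_{\vec\zeta}W(\mathbf X_0+\mathbf R)$. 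Since $W$ is bounded and $Q_{\vec\zeta}$ contracts by a factor $\Ord(|\vec\zeta|^{-1})$, for $|\vec\zeta|$ larger than a constant depending only on $\|\epsilon\|_{W^{5,\infty}(\dom)}$, $\|\mu\|_{W^{5,\infty}(\dom)}$ the map is a contraction and yields $\mathbf R$ with $\|\mathbf R\|=\Ord(|\vec\zeta|^{-1}\|\mathbf X_0\|)$. The weighted $L^2$ bound is then upgraded to the pointwise bound $\|\vec r_e\|_{L^\infty(\dom)},\|\vec r_h\|_{L^\infty(\dom)}=\Ord(|\vec\zeta|^{-1})$ by elliptic bootstrapping, each gained derivative costing one order of regularity of $\epsilon,\mu$, which is where the five derivatives are consumed.

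Finally I would undo the construction: check that the two scalar gauge components produced by the parametrix in fact vanish, or are $\Ord(|\vec\zeta|^{-1})$ and decouple, so that $(\vec e,\vec h)$ genuinely solves the reduced Maxwell system, and then substitute back $\vec E=\epsilon^{-1/2}\vec e$, $\vec H=\mu^{-1/2}\vec h$ to recover the stated form. The main obstacle is this last decoupling together with the $L^\infty$ upgrade: one must verify that the augmented Dirac solution carries no spurious scalar part and simultaneously push the remainder estimate from the weighted $L^2$ spaces in which $G_{\vec\zeta}$ naturally acts up to $L^\infty(\dom)$, which is exactly why the strong $W^{5,\infty}$ regularity of the coefficients is imposed.
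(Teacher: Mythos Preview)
Your overall framework---augment the six Maxwell unknowns by two scalar gauge variables to obtain an $8\times 8$ Dirac-type system $(P+W)Y=0$ with constant-coefficient principal part, then build CGO solutions using the Faddeev resolvent $G_{\vec\zeta}$---is the same as the paper's. But the paper does \emph{not} attack the first-order system directly. It uses the factorization $(P+W)(P-W^t)=-\Delta+Q$ to pass to a genuinely diagonal Schr\"odinger system, constructs $Z=e^{i\vec\zeta\cdot x}(L+R)$ solving $(-\Delta+Q)Z=0$ by the standard Sylvester--Uhlmann argument, and only then sets $Y=(P-W^t)Z$. The vanishing of the two scalar components of $Y$ is obtained not by an a posteriori check but by a specific algebraic choice of the constant amplitude $L$ (this is exactly the content of \cite[Proposition~9]{C}, quoted in the paper); with the right $L$ the scalar slots of $(P-W^t)Z$ vanish identically, and the desired leading profiles $\sigma_e\epsilon^{-1/2}\vec\zeta/|\vec\zeta|$, $\sigma_h\mu^{-1/2}\vec\zeta/|\vec\zeta|$ drop out after choosing $\vec a,\vec b$ proportional to $\vec\zeta^\ast/|\vec\zeta|^2$.

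Your direct first-order route has a concrete error and a gap. The error: the kernel of $\mathrm{symb}_P(\vec\zeta)$ is \emph{trivial} when $\vec\zeta\cdot\vec\zeta=\omega^2\neq 0$, because $P^2=-\Delta$ forces $(\mathrm{symb}_P(\vec\zeta))^2=\omega^2 I_8$; in particular the longitudinal amplitudes $(0,\sigma_h\vec\zeta/|\vec\zeta|,0,\sigma_e\vec\zeta/|\vec\zeta|)$ are not annihilated by the symbol (e.g.\ the divergence row returns $\sigma_e\,\omega^2/|\vec\zeta|\neq 0$). So your ansatz does not kill the leading term as claimed, and the remainder equation you wrote is not the one that actually arises. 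The gap: even if you repair the ansatz, ``check that the two scalar gauge components vanish, or are $\Ord(|\vec\zeta|^{-1})$ and decouple'' is not a proof; an $\Ord(|\vec\zeta|^{-1})$ scalar part does not give a Maxwell solution. The paper's detour through the second-order equation is precisely what makes this step clean: the choice of $L$ for $Z$ is engineered so that $(P-W^t)Z$ has exactly zero scalar components, and that is the nontrivial input you are missing.
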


Let $\alpha=\log \epsilon$, $\beta = \log \mu$, and $I_n$ be the identity matrix in dimension $n$. 
Suppose that
\begin{equation}
X=\left(\begin{array}{c}
h\\ \vec H \\\hline e\\\vec E
\end{array}\right)
\end{equation}
satisfies the equation
\begin{equation}
(P+V)X=0,
\end{equation}
where
\begin{equation}
P=\frac{1}{i}\left(\begin{array}{cc|cc}
 & & &\dv\\ & & \nabla&-\crl\\\hline
  &\dv& & \\ \nabla& \crl& & 
\end{array}\right),
\end{equation}
\begin{equation}
V=\frac{1}{i}\left(\begin{array}{cc|cc}
i\omega\mu& & &(\nabla\alpha)\cdot\\ & i\omega\mu I_3& (\nabla \alpha)& \\\hline
 & (\nabla \beta)\cdot&i\omega\epsilon \\ (\nabla\beta)& & &i\omega\epsilon I_3
\end{array}\right).
\end{equation}
Observe that if $e$ and $h$ vanish, then$(\vec E, \vec H)$ is a solution of 
\begin{equation}
\left\{\begin{array}{l}
\crl\vec E= i\omega\mu \vec H,\\[5pt]
\crl\vec H=- i\omega\epsilon \vec E.
\end{array}\right.
\end{equation}
Let
\begin{equation}
Y=\left(\begin{array}{c|c}
\mu^{1/2}I_4& \\\hline
 &\epsilon^{1/2} I_4
\end{array}\right) X,\quad\kappa =\omega\mu^{1/2}\epsilon^{1/2}, 
\end{equation}
\begin{equation}
W=\kappa I_8+
\frac{1}{2i}\left(\begin{array}{cc|cc}
 & & &(\nabla\alpha)\cdot\\ & & (\nabla \alpha)& (\nabla\alpha)\times \\\hline
 & (\nabla \beta)\cdot& & \\ (\nabla\beta)& -(\nabla\beta)\times & &
\end{array}\right).
\end{equation}
Then 
\begin{equation}\label{Y-eq}
(P+W)Y=0.
\end{equation}

Note that
\begin{equation}
(P+W)(P-W^t)=-\triangle +Q, 
\end{equation}
where
\begin{multline}
Q=\frac{1}{2}\left(\begin{array}{cc|cc}
(\triangle\alpha)& & & \\ & 2(\pr_i\pr_j\alpha)_{ij} - (\triangle\alpha)I_3& & \\\hline
 & &(\triangle\beta) & \\ & & &  2(\pr_i\pr_j\beta)_{ij} - (\triangle\beta)I_3
\end{array}\right)\\[5pt]
-\left(\begin{array}{c|c}
(\kappa^2-\frac{1}{4}(\nabla\alpha\cdot\nabla\alpha))I_4& \begin{matrix} & -2i(\nabla\kappa)\cdot\\ -2i(\nabla\kappa)& \end{matrix} \\\hline
\begin{matrix} & -2i(\nabla\kappa)\cdot\\ -2i(\nabla\kappa)& \end{matrix}&
(\kappa^2-\frac{1}{4}(\nabla\beta\cdot\nabla\beta))I_4
\end{array}\right).
\end{multline}
If $Z$ is a solution of
\begin{equation}\label{Z-eq}
(-\triangle +Q)Z=0,
\end{equation}
then $Y=(P-W^t)Z$ is a solution to \eqref{Y-eq}. We would like to construct solutions of \eqref{Z-eq} that are of the form 
\begin{equation}
Z({\vec\zeta},x)=e^{i{\vec\zeta}\cdot x}(L({\vec\zeta})+R({\vec\zeta},x)),\quad {\vec\zeta}\in\C^3.
\end{equation}
To do so, first extend the coefficients $\epsilon$, $\mu$ to $\R^3$ so that $\epsilon-1,\mu-1\in C^5_0(\R^3)$. Then $\omega^2I_8+Q\in C^3_0(\R^3)$. Let $\rho>0$ be such that $\supp(\omega^2I_8+Q)$ is contained in the ball of radius $\rho$. We can prove the following

\begin{lem}[compare to {\cite[Lemma 8]{C}}]
There exist a $C(\rho)>0$ such that for any $L\in \C^8$,   ${\vec\zeta}\in\C^3$ with ${\vec\zeta}\cdot{\vec\zeta}=\omega^2$ and
\begin{equation}\label{zeta-condition}
|{\vec\zeta}|>C(\rho)||\omega^2 I_8+Q||_{L^\infty(\R^3)},
\end{equation}
there exists 
\begin{equation}
Z=e^{i{\vec\zeta}\cdot x}(L+R)
\end{equation}
a solution of \eqref{Z-eq} in $\R^3$, $Z\in W^{3,2}(\dom)$ and with
\begin{equation}\label{R-bound}
||R||_{W^{3,2}(\dom)}\leq \frac{1}{|{\vec\zeta}|}C(\rho)|L|\,||\omega^2+Q||_{W^{3,\infty}(\R^3)}.
\end{equation}
\end{lem}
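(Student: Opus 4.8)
The plan is to conjugate $-\triangle+Q$ by the exponential weight and solve the resulting perturbed constant-coefficient equation by a Neumann series built on a Faddeev-type Green operator. Writing $Q'=\omega^2 I_8+Q$, which by the choice of $\rho$ is supported in the ball $B_\rho$ of radius $\rho$, and substituting $Z=e^{i\vec\zeta\cdot x}(L+R)$, a direct computation using $\vec\zeta\cdot\vec\zeta=\omega^2$ gives $e^{-i\vec\zeta\cdot x}(-\triangle+Q)e^{i\vec\zeta\cdot x}(L+R)=(-\triangle-2i\vec\zeta\cdot\nabla)R+Q'(L+R)$, so that \eqref{Z-eq} for $Z$ is equivalent to $(-\triangle-2i\vec\zeta\cdot\nabla)R=-Q'(L+R)$. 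Since $L$ is constant it is annihilated by $-\triangle-2i\vec\zeta\cdot\nabla$, so the constant profile contributes no source and only the compactly supported potential $Q'$ drives $R$.

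Next I would introduce the solution operator $G_{\vec\zeta}$ for $-\triangle-2i\vec\zeta\cdot\nabla$, i.e.\ the Fourier multiplier with symbol $(|\xi|^2+2\vec\zeta\cdot\xi)^{-1}$, acting componentwise on $\C^8$-valued functions. The essential analytic input is the Sylvester--Uhlmann weighted estimate: for $-1<\delta<0$ there is a constant, uniform over $\vec\zeta$ on the variety $\vec\zeta\cdot\vec\zeta=\omega^2$ with $|\vec\zeta|$ large, such that $\|G_{\vec\zeta}f\|_{L^2_\delta(\R^3)}\le \frac{C}{|\vec\zeta|}\|f\|_{L^2_{\delta+1}(\R^3)}$, where $L^2_\delta$ carries the weight $(1+|x|^2)^{\delta/2}$. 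This is exactly the content of \cite[Lemma 8]{C} adapted to the shifted variety $\vec\zeta\cdot\vec\zeta=\omega^2$ (the $\omega^2$ shift being harmless, as it has been absorbed into $Q'$), and I would quote it rather than reprove it. Because $Q'$ is supported in $B_\rho$, multiplication by $Q'$ maps $L^2_\delta(\R^3)$ into $L^2_{\delta+1}(\R^3)$ with $\|Q'v\|_{L^2_{\delta+1}}\le (1+\rho^2)^{1/2}\|Q'\|_{L^\infty}\|v\|_{L^2_\delta}$, which is where the $C(\rho)$ dependence enters.

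With these two bounds, I would recast the equation as the fixed-point problem $(I+G_{\vec\zeta}Q')R=-G_{\vec\zeta}(Q'L)$. The composition $G_{\vec\zeta}Q'$ has operator norm at most $\frac{C(\rho)}{|\vec\zeta|}\|Q'\|_{L^\infty}$ on $L^2_\delta(\R^3)$, which is $<1/2$ precisely when $|\vec\zeta|$ satisfies \eqref{zeta-condition}; hence $I+G_{\vec\zeta}Q'$ is invertible by a Neumann series, and since $\|G_{\vec\zeta}(Q'L)\|_{L^2_\delta}\le \frac{C(\rho)}{|\vec\zeta|}\|Q'\|_{L^\infty}|L|$ one gets the base estimate $\|R\|_{L^2_\delta}\le \frac{C(\rho)}{|\vec\zeta|}\|Q'\|_{L^\infty}|L|$. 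To upgrade to \eqref{R-bound} I would bootstrap in the order of differentiation: since $G_{\vec\zeta}$ commutes with $\pr^\alpha$, applying $\pr^\alpha$, $|\alpha|\le 3$, to the equation and expanding $\pr^\alpha(Q'(L+R))$ by Leibniz produces the top term $Q'\pr^\alpha R$, absorbed into the left side for $|\vec\zeta|$ large by the same $1/|\vec\zeta|$ gain, plus lower-order terms in which at most three derivatives fall on $Q'$, controlled by $\|Q'\|_{W^{3,\infty}}$ together with the already-established lower-order bounds on $R$. Summing over $|\alpha|\le 3$ closes the estimate in $W^{3,2}_\delta(\R^3)$, and restricting to the bounded domain $\dom$, where the weight is bounded above and below, turns it into \eqref{R-bound}; note that this upgrade requires no smallness beyond \eqref{zeta-condition}.

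The main obstacle is the weighted resolvent estimate for $G_{\vec\zeta}$ with the $1/|\vec\zeta|$ gain, uniform along the characteristic variety: the symbol $|\xi|^2+2\vec\zeta\cdot\xi$ vanishes on a codimension-two sphere, and controlling the multiplier near this set is the heart of the Sylvester--Uhlmann argument. Since the statement is phrased as a comparison to \cite[Lemma 8]{C}, I would rely on that estimate and devote the write-up to the (routine) componentwise and Leibniz bookkeeping, and to verifying that the single smallness threshold \eqref{zeta-condition} suffices both for the Neumann series and for absorbing the top-order term in the bootstrap.
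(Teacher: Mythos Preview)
Your proposal is correct and follows essentially the same approach as the paper: conjugate by the exponential, invoke the Sylvester--Uhlmann weighted estimate for the Faddeev Green's operator $G_{\vec\zeta}$ on $L^2_\delta$-type spaces, and invert $I+G_{\vec\zeta}(\omega^2 I_8+Q)$ by a Neumann series under the largeness condition on $|\vec\zeta|$. The only difference is that the paper quotes the mapping property of $G_{\vec\zeta}$ directly at the $W^{s,2}_\delta$ level (citing Sylvester--Uhlmann), so the $W^{3,2}$ bound comes out of the Neumann series in one step rather than via your Leibniz bootstrap from $L^2_\delta$.
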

\begin{proof}
We only need to show that such an $R$ exists. The equation it need to satisfy is
\begin{equation}
(-\triangle-2i{\vec\zeta}\cdot\nabla)R+(\omega^2I_8+Q)R=-(\omega^2I_8+Q)L.
\end{equation}

We would like to, in a certain sense, invert $(-\triangle-2i{\vec\zeta}\cdot\nabla)$. For some $-1<\delta<0$, define the spaces
\begin{equation}
L^2_\delta(\R^3)=\left\{f:||f||_{L^2_\delta}=||(1+|x|^2)^{\delta/2}f||_{L^2(\R^3)}<\infty\right\},
 \end{equation} 
 \begin{equation}
W^{s,2}_\delta(\R^3)=\left\{f:||f||_{W^{s,2}_\delta}=||(1+|x|^2)^{\delta/2}f||_{W^{s,2}(\R^3)}<\infty\right\}.
 \end{equation} 
There exists (see, for example, \cite[Corollary 2.2]{SU}) $G_{\vec\zeta}:W^{s,2}_{\delta+1}(\R^3)\to W^{s,2}_\delta(\R^3)$ such that $(-\triangle-2i{\vec\zeta}\cdot\nabla) G_{\vec\zeta} \phi=\phi$ and
\begin{equation}
|| G_{\vec\zeta} \phi||_{W^{s,2}_\delta(\R^3)}\leq \frac{1}{|{\vec\zeta}|}C(\delta)||f||_{W^{s,2}_{\delta+1}(\R^3)}
\end{equation}

The equation $R$ should satisfy can then be written as
\begin{equation}
(I_8+G_{\vec\zeta}(\omega^2I_8+Q))R=-G_{\vec\zeta}(\omega^2 I_8+Q)L.
\end{equation}
We can choose the constant $C(\rho)$ in \eqref{zeta-condition} so that
\begin{equation}
||G_{\vec\zeta}(\omega^2I_8+Q)R||_{W^{3,2}_\delta(\R^3)}\leq \frac{1}{2}||R||_{W^{3,2}_\delta(\R^3)},
\end{equation}
in which case there exists a solution
\begin{equation}
R=-\left(I_8+G_{\vec\zeta}(\omega^2I_8+Q)\right)^{-1}G_{\vec\zeta}(\omega^2 I_8+Q)L,
\end{equation}
and it satisfies the estimate \eqref{R-bound}.
\end{proof}

The following lemma is a restatement of a result in \cite{C}:
\begin{lem}[see {\cite[Proposition 9]{C}}]
There exists a constant $C(\rho, ||\epsilon-1||_{W^{5,\infty}(\R^3)}, ||\mu-1||_{W^{5,\infty}(\R^3)})>0$ such that if ${\vec\zeta}\in \C^3$, ${\vec\zeta}\cdot{\vec\zeta}=\omega^2$,
\begin{equation}
|{\vec\zeta}|>C(\rho, ||\epsilon-1||_{W^{5,\infty}(\R^3)}, ||\mu-1||_{W^{5,\infty}(\R^3)}),
\end{equation}
\begin{equation}
L=\frac{1}{|{\vec\zeta}|}\left(\begin{array}{c}
{\vec\zeta}\cdot \vec a\\ \omega \vec b\\\hline {\vec\zeta}\cdot \vec b\\ \omega \vec a
\end{array}\right),\quad \vec a,\vec b\in C^3,
\end{equation}
then there exists 
\begin{equation}
Z=e^{i{\vec\zeta}\cdot x}(L+R)
\end{equation}
a solution of \eqref{Z-eq} in $\R^3$, $Z\in W^{3,2}(\dom)$ and with
\begin{equation}\label{R-bound}
||R||_{W^{3,2}(\dom)}\leq \frac{1}{|{\vec\zeta}|}C(\rho)|L|\,||\omega^2+Q||_{W^{3,\infty}(\R^3)}.
\end{equation}
Additionally, $Y=(P-W^t)Z$ solves $(P+W)Y=0$ and is of the form
\begin{equation}
Y=\left(\begin{array}{c}
0\\ \mu^{1/2}\vec H \\\hline 0\\\epsilon^{1/2}\vec E
\end{array}\right).
\end{equation}
\end{lem}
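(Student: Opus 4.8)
The existence of $Z$ and the bound on $R$ require no new work: the displayed $L$ is just one particular element of $\C^8$, so the preceding lemma applies verbatim and produces $Z=e^{i{\vec\zeta}\cdot x}(L+R)$ solving \eqref{Z-eq} with the stated estimate, as soon as $|{\vec\zeta}|$ exceeds the threshold there. Setting $Y=(P-W^t)Z$ and invoking the factorization $(P+W)(P-W^t)=-\triangle+Q$ gives $(P+W)Y=(P+W)(P-W^t)Z=(-\triangle+Q)Z=0$ at once. Writing $Y=(Y_1,\vec Y_2,Y_3,\vec Y_4)^t$ in the $(\text{scalar},\text{vector},\text{scalar},\text{vector})$ block pattern, and recalling $\vec Y_2=\mu^{1/2}\vec H$, $\vec Y_4=\epsilon^{1/2}\vec E$, the only genuine content is to show that the two scalar slots vanish, $Y_1=Y_3=0$: once this is known $Y$ has the asserted shape, and reading off $\vec H=\mu^{-1/2}\vec Y_2$, $\vec E=\epsilon^{-1/2}\vec Y_4$ recovers a genuine solution of the homogeneous Maxwell system.

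First I would expand $(P+W)Y=0$ into its four blocks. Using $\kappa=\omega\mu^{1/2}\epsilon^{1/2}$, $\alpha=\log\epsilon$, $\beta=\log\mu$, together with the identities $\dv\vec Y_4+\tfrac12(\nabla\alpha)\cdot\vec Y_4=\epsilon^{-1/2}\dv(\epsilon\vec E)$ and $\dv\vec Y_2+\tfrac12(\nabla\beta)\cdot\vec Y_2=\mu^{-1/2}\dv(\mu\vec H)$, the two scalar rows become
\begin{equation}
\kappa Y_1=i\,\epsilon^{-1/2}\dv(\epsilon\vec E),\qquad \kappa Y_3=i\,\mu^{-1/2}\dv(\mu\vec H).
\end{equation}
Thus the scalar components are exactly multiples of the charge densities $\dv(\epsilon\vec E)$ and $\dv(\mu\vec H)$, and $Y_1=Y_3=0$ is equivalent to the two divergence constraints, which is precisely the obstruction to $Y$ coming from a Maxwell solution.

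The heart of the matter is therefore to show that these two scalar quantities vanish identically, and here I would follow \cite{C}. Using the two vector rows to solve for $\vec Y_2,\vec Y_4$ in terms of $Y_1,Y_3$ and their first derivatives (to leading order $\vec Y_4=-\tfrac{1}{i\kappa}\nabla Y_1+\cdots$ and $\vec Y_2=-\tfrac{1}{i\kappa}\nabla Y_3+\cdots$) and substituting back into the two scalar rows, the pair $\Phi=(Y_1,Y_3)$ is seen to satisfy a closed $\C^2$-valued Schr\"odinger system $(-\triangle+q)\Phi=0$ in which $q+\omega^2 I_2$ is supported in $\supp(\epsilon-1,\mu-1)$, since every coefficient feeding into $q$ (namely $\omega^2-\kappa^2$, $\nabla\alpha$, $\nabla\beta$ and their derivatives) is compactly supported. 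This decoupling is what makes the augmented system effectively elliptic, and it is the step I expect to be the main obstacle, because it requires inverting the $\kappa I+\cdots$ factor on the vector components while carefully tracking the lower order terms produced by $\nabla\alpha,\nabla\beta$.

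It remains to identify the leading behaviour of $\Phi$. Since $L$ is constant and $W^t$ acts by multiplication, the plane-wave part of $Y$ outside $\supp(\epsilon-1,\mu-1)$ is $e^{i{\vec\zeta}\cdot x}(P({\vec\zeta})-\omega I_8)L$, where $\kappa=\omega$ and the gradient terms vanish and $P({\vec\zeta})$ denotes the symbol of $P$ with $\nabla\mapsto i{\vec\zeta}$. A direct computation with the displayed $L$ gives $(P({\vec\zeta})L)_1={\vec\zeta}\cdot(\omega\vec a/|{\vec\zeta}|)=\omega L_1$ and $(P({\vec\zeta})L)_3={\vec\zeta}\cdot(\omega\vec b/|{\vec\zeta}|)=\omega L_3$, so the scalar slots of $(P({\vec\zeta})-\omega I_8)L$ vanish; this is exactly why the particular form of $L$ is forced upon us. Hence $\Phi=e^{i{\vec\zeta}\cdot x}(0+r)$ carries no plane-wave leading term, and its remainder $r$ satisfies a Lippmann--Schwinger equation $(I_2+G_{\vec\zeta}(q+\omega^2 I_2))r=0$. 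For $|{\vec\zeta}|$ large the operator $I_2+G_{\vec\zeta}(q+\omega^2 I_2)$ is invertible by the same Neumann-series estimate on $G_{\vec\zeta}$ used in the previous lemma, so $r=0$, whence $\Phi\equiv0$ and $Y_1=Y_3=0$. Substituting back then yields the stated form of $Y$ and, via $\vec E=\epsilon^{-1/2}\vec Y_4$, $\vec H=\mu^{-1/2}\vec Y_2$, the CGO solution claimed in Proposition \ref{cgo}.
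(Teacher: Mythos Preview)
The paper does not supply its own proof of this lemma: it is simply quoted as \cite[Proposition~9]{C}. Your sketch is in the spirit of that reference and the overall architecture is right---build $Z$ from the preceding lemma, set $Y=(P-W^t)Z$, observe that the special form of $L$ kills the plane-wave part of the scalar slots, and conclude $Y_1=Y_3=0$ by a Lippmann--Schwinger uniqueness argument for a decoupled scalar equation.

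One step is mis-described, however. You say the closed equation for $(Y_1,Y_3)$ comes from ``using the two vector rows to solve for $\vec Y_2,\vec Y_4$ in terms of $Y_1,Y_3$ and substituting back into the scalar rows.'' The vector rows contain $\crl\vec Y_2$ and $\crl\vec Y_4$, so this substitution does not close up algebraically. The actual mechanism (most transparent in the $X$ variables) is to take the \emph{divergence} of the vector rows of $(P+V)X=0$ and feed in the scalar rows. From the second vector row one has $\crl\vec E=i\omega\mu\vec H+\epsilon^{-1}\nabla(\epsilon e)$; taking $\dv$ gives
\[
0=i\omega\,\dv(\mu\vec H)+\dv\bigl(\epsilon^{-1}\nabla(\epsilon e)\bigr),
\]
and the scalar row $\dv(\mu\vec H)=-i\omega\mu\epsilon e$ then yields the closed equation
\[
\dv\bigl(\epsilon^{-1}\nabla(\epsilon e)\bigr)+\omega^2\mu\epsilon\, e=0
\]
for $e$ alone (and symmetrically $\dv(\mu^{-1}\nabla(\mu h))+\omega^2\mu\epsilon\, h=0$). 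A Liouville substitution turns each into $(-\triangle+q)\,\cdot=0$ with $q+\omega^2$ compactly supported, after which your remainder/uniqueness argument is correct as written. With this correction to the derivation of the decoupled equation, your proposal matches the argument the paper cites.
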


Under the conditions of the previous lemma, we get 
\begin{equation}
\vec E=e^{i{\vec\zeta}\cdot x}\left(\epsilon^{-1/2}\frac{{\vec\zeta}\cdot\vec a}{|{\vec\zeta}|}{\vec\zeta}+\vec r_e\right),
\end{equation}
\begin{equation}
\vec H=e^{i{\vec\zeta}\cdot x}\left(\mu^{-1/2}\frac{{\vec\zeta}\cdot\vec b}{|{\vec\zeta}|}{\vec\zeta}+\vec r_h\right).
\end{equation}
For $\sigma_e,\sigma_h\in\{0,1\}$, choose
\begin{equation}
\vec a = \sigma_e \frac{\vec\zeta^\ast}{|\vec\zeta|^2},\quad 
\vec b = \sigma_h \frac{\vec\zeta^\ast}{|\vec\zeta|^2}.
\end{equation}
Then
\begin{equation}
\vec E= e^{i\vec \zeta\cdot x}\left(\sigma_e\epsilon^{-1/2}\frac{\vec \zeta}{|\zeta|}+\vec r_e\right),
\end{equation}
\begin{equation}
\vec H= e^{i\vec \zeta\cdot x}\left(\sigma_h\mu^{-1/2}\frac{\vec \zeta}{|\zeta|}+\vec r_h\right),
\end{equation}
and, applying the previous lemma and Sobolev embedding
\begin{equation}
||\vec r_e||_{L^\infty(\dom)},||\vec r_h||_{L^\infty(\dom)} =\Ord(|\vec \zeta|^{-1}).
\end{equation}

\subsection{Proof of the main theorem}

Let $\xi\in\R^3$. WLOG,  $\xi=\xi_1e_1$. Let $\vec \zeta_j=\alpha_j+i\beta_j\in\C^n$, $j=0,1,2,3$,
\begin{equation}
\beta_j=(-1)^j\left(\tau^2+\xi_1^2/4  \right)^{1/2}e_3, 
\end{equation}
\begin{equation}
\alpha_0=\frac{\xi_1}{2}e_1-(\omega^2+\tau^2)^{1/2}e_2,
\end{equation}
\begin{equation}
\alpha_1=\frac{\xi_1}{2}e_1+(\omega^2+\tau^2)^{1/2}e_2,
\end{equation}
\begin{equation}
\alpha_2=\alpha_3=-\frac{\xi_1}{2}e_1-(\omega^2+\tau^2)^{1/2}e_2.
\end{equation}
Then
\begin{equation}
\vec \zeta_j\cdot\vec \zeta_j=\omega^2,\quad |\vec \zeta_j|^2=2\tau^2+\omega^2+\xi_1^2/4.
\end{equation}
For sufficiently large $\tau>0$, let
\begin{equation}
\vec e_j= e^{i\vec \zeta_j\cdot x}(\sigma_e\epsilon^{-1/2}|\vec\zeta_j|^{-1}\vec \zeta_j+r_{ej})
\end{equation}
\begin{equation}
\vec h_j= e^{i\vec \zeta_j\cdot x}(\sigma_h\mu^{-1/2}|\vec\zeta_j|^{-1}\vec \zeta_j+r_{hj})
\end{equation}
be the special solutions given by Proposition \ref{cgo}.

Note that
\begin{equation}
\vec \zeta_0\cdot\vec \zeta_1=-\omega^2-2\tau^2,\quad \vec\zeta_2\cdot\vec \zeta_3^\ast=2\tau^2+\omega^2+\xi_1^2/4,
\end{equation}
\begin{equation}
\vec \zeta_0\cdot\vec\zeta_2=2\tau^2+\omega^2,\quad
\vec \zeta_1\cdot\vec \zeta_3=-\omega^2.
\end{equation}
Then
\begin{equation}
\vec e_0\cdot\vec e_1(\vec e_2\cdot \vec e_3^\ast)^k=-
\sigma_e|\epsilon|^{-k}\epsilon^{-1}\exp(i\xi\cdot x)+\Ord(\tau^{-1}),
\end{equation}
\begin{equation}
\vec e_0\cdot\vec e_2(\vec e_1\cdot\vec e_3)(\vec e_2\cdot \vec e_3^\ast)^{k-1}=\Ord(\tau^{-1}),
\end{equation}
where $\Ord(\tau^{-1})$ is to be understood in the sense of $L^\infty(\dom)$ norms. Choosing $\sigma_e=1$, $\sigma_h=0$ and taking the limit $\tau\to\infty$ in \eqref{integral-identity}, we get
\begin{equation}
\left(\frac{a_k-a'_k}{|\epsilon|^{k}\epsilon}\chi_\dom  \right)^{\wedge}(\xi)=0,\quad \forall \xi\in\R^3.
\end{equation}
This implies $a_k=a'_k$. By an identical argument it follows that $b_k=b'_k$. This concludes the induction step.

\paragraph{Acknowledgement} The author is grateful to Prof. Gunther Uhlmann for proposing this problem and for suggesting improvements to the manuscript.

\bibliography{semilinear}
\bibliographystyle{plain}
\end{document}